\documentclass[a4paper,11pt]{article}
\usepackage[utf8]{inputenc}
\usepackage{latexsym}
\usepackage{amsfonts}
\usepackage{amssymb}

\usepackage{enumitem} 
\usepackage{braket}
\usepackage{hyperref}

\usepackage{amsmath}
\usepackage{amssymb}
\usepackage{amsfonts}
\usepackage{amsthm}
\usepackage{indentfirst}
\usepackage{psfrag}
\usepackage{amscd,stmaryrd,latexsym}
\usepackage[margin=1.3in]{geometry}
\usepackage[pdftex]{graphicx}
\DeclareGraphicsRule{.pdftex}{pdf}{.pdftex}{}

\newtheorem{thm}{Theorem}[section]
\newtheorem{lem}{Lemma}[section]

\newtheorem{cor}{Corollary}[section]

\newtheorem{Prop}{Proposition}[section]
\newtheorem*{St*}{Statement}

\newtheorem*{theorem*}{Theorem}
\theoremstyle{remark}

\theoremstyle{definition}

\theoremstyle{remark}
\newtheorem{oss}{Remark}[section]

\newcommand{\be}{\begin{equation}}
\newcommand{\ee}{\end{equation}}
\newcommand{\R}{\mathbb{R}}

\newcommand{\N}{\mathbb{N}}

\newcommand{\Het}{\mathbb{H}}
\newcommand{\OHet}{\overline{\mathbb{H}}}

\newcommand{\He}{\mathbb{H}_{\varepsilon}}

\newcommand{\ca}[2]{\mathcal{#1}_{#2}}

\newcommand{\spt}[1]{\text{spt}\,\|#1\|}

\newcommand\res{\mathop{\hbox{\vrule height 7pt width .5pt depth 0pt
\vrule height .5pt width 6pt depth 0pt}}\nolimits}

\def\eps{\mathop{\varepsilon}}

\def\Ece{\mathop{\mathcal{E}_{\varepsilon}}}

\def\Hc{\mathop{\mathcal{H}}}

\def\om{\omega}

\def\p{\partial}

\def\quo{\tilde{M}\stackrel{\times}{\sim}[0,\om)}

\def\Se{\mathcal{S}_{\varepsilon}}

\def\eps{\mathop{\varepsilon}}

\def\om{\omega}
\def\p{\partial}

\newcommand{\Rc}[1]{\text{Ric}_{#1}} 
\DeclareMathAlphabet{\mathscr}{OT1}{pzc}{m}{it}

\begin{document} 
\title{\textbf{Generic existence of multiplicity-$1$ minmax minimal hypersurfaces via Allen--Cahn}}
\author{Costante Bellettini\thanks{Research partially supported by the EPSRC (grant EP/S005641/1).}\\
University College London}
\date{}

\maketitle

\begin{abstract}
In Guaraco's work \cite{Gua} a new proof was given of the existence of a closed minimal hypersurface in a compact Riemannian manifold $N^{n+1}$ with $n\geq 2$. This was achieved by employing an Allen--Cahn approximation scheme and a one-parameter minmax for the Allen--Cahn energy (relying on works by Hutchinson, Tonegawa, Wickramasekera to pass to the limit as the Allen-Cahn parameter tends to $0$). The minimal hypersurface obtained may a priori carry a locally constant integer multiplicity. Here we consider a minmax construction that is a modification of the one in \cite{Gua}, by allowing an initial freedom on the choice of the valley points between which the mountain pass construction is carried out, and then optimising over said choice. We prove that, when $2\leq n\leq 6$ and the metric is bumpy, this minmax leads to a (smooth closed) minimal hypersurface with multiplicity $1$. (When $n=2$ this conclusion also follows from Chodosh--Mantoulidis's work \cite{ChoMan}.) As immediate corollary we obtain that every compact Riemannian manifold of dimension $n+1$, $2\leq n\leq 6$, endowed with a bumpy metric, admits a two-sided smooth closed minimal hypersurface (this existence conclusion also follows from Zhou's result \cite{Zhou3} for minmax constructions via Almgren--Pitts theory). 
\end{abstract}

\section{Introduction}
Existence problems in Riemannian geometry have a long history and those concerned with stationary points of area (and related functionals) occupy a prominent position. A minmax approach introduced in the 70s by Almgren and Pitts has lead in the last decade to extraordinary developments in geometric analysis, in particular after the celebrated work \cite{MN} by Marques--Neves. More recently, an alternative minmax approach has been developed, based on the approximation of the area functional by the Allen--Cahn energy. In particular, the existence of a closed minimal hypersurface in an arbitrary compact Riemannian manifold $N^{n+1}$ ($n\geq 2$), originally proved in \cite{Alm}, \cite{Pit}, \cite{SSY}, \cite{SS}, has been achieved in Guaraco's work \cite{Gua}, relying on \cite{HT}, \cite{TonWic}, \cite{Ton}, \cite{Wic}. Both in the Almgren--Pitts and in the Allen--Cahn approach, the minimal hypersurface is obtained as an integral varifold, that turns out to be smooth away from a singular set of codimension $\geq 7$ thanks to the fundamental regularity/compactness theory available in each of the two settings. In general, in both approaches, the hypersurface may a priori carry multiplicity $>1$. 

Very recently, Zhou \cite{Zhou3} obtains multiplicity-$1$ and two-sidedness for minmax minimal hypersurfaces constructed via the Almgren--Pitts minmax when $2\leq n\leq 6$ and $N$ is endowed with a bumpy (thus generic) metric (or, also, with a metric with positive Ricci curvature). The result applies to multi-parameter minmax, and confirms a well-known conjecture of Marques--Neves \cite[1.2]{MarNev2}. Previous progress in this direction had been made in \cite{MarNev}.

A natural counterpart to Marques--Neves's conjecture is expected to be true for minmax constructions via Allen--Cahn. For $n=2$ it follows from the work of Chodosh--Mantoulidis \cite{ChoMan} (valid for arbitrary solutions with finite Morse index, not necessarily minmax solutions) that the minimal surface obtained by a (one- or multi-parameter) Allen-Cahn minmax is two-sided with multiplicity $1$ in the case of bumpy metrics (and in the case of metrics with positive Ricci). For the one-parameter minmax in \cite{Gua}, it is proved by the author in \cite{B1} that if the metric has positive Ricci curvature then the multiplicity is $1$.

We point out that \cite{Zhou3} and \cite{ChoMan} obtain from their multi-parameter multiplicity-$1$ results (combined with the Weyl Laws available respectively for Almgren--Pitts and Allen--Cahn minmax constructions \cite{LMN}, \cite{GuaGas}) the existence of infinitely many minimal hypersurfaces. In other words, they establish, under their respective assumptions, the validity of (versions of) the well-known Yau's conjecture, which is established by other methods and for arbitrary Riemannian metrics with $2\leq n\leq 6$ by the combined efforts of Marques--Neves \cite{MarNev3} and Song \cite{Song} (for generic metrics with $n\geq 7$, see \cite{YLi}). The relevance of multiplicity-$1$ results is shared by all minmax constructions (and not only): a minmax procedure developed by Rivi\`{e}re \cite{Riv} for $2$-dimensional surfaces in arbitrary codimension also faces the same issue, resolved by Pigati--Rivi\`{e}re \cite{PigRiv}.

We obtain here a multiplicity-$1$ result in the case in which $2\leq n\leq 6$ and $N$ has a bumpy (thus generic) metric, for a one-parameter minmax construction (via Allen--Cahn energy) that is a modification of the one set up in \cite{Gua}, and of which we now give a brief overwiew, with details given in Section \ref{minmax_setup}. (The minmax construction itself can be performed for an arbitrary Riemannian metric and in any dimension, while the multiplicity-$1$ conclusion exploits the metric and dimensional restrictions.)

For each $\eps$ (the parameter of the Allen--Cahn energy $\Ece$), instead of using the admissible class of paths in $W^{1,2}(N)$ that join the constant $-1$ to the constant $+1$ (as in \cite{Gua}), we consider all continuous paths in $W^{1,2}(N)$ that connect two distinct strictly stable critical points $v^1_{\eps}, v^2_{\eps}$ of the Allen--Cahn energy. (Note that the constants $-1$ and $+1$ are possible choices of strictly stable critical points.) For each $\eps$ and for any such $v^1_{\eps}\neq v^2_{\eps}$, the minmax produces by a standard mountain pass lemma an Allen--Cahn critical point $u_{(v^1_{\eps}, v^2_{\eps})}$ with Morse index at most $1$, with Allen--Cahn energy $\Ece(u_{(v^1_{\eps}, v^2_{\eps})})$ realizing the minmax value. Then we consider (for each $\eps$)
$$\inf_{v^1_{\eps}, v^2_{\eps}}\Ece(u_{(v^1_{\eps}, v^2_{\eps})}),$$
where $v^1_{\eps}\neq v^2_{\eps}$ vary among all possible strictly stable critical points of $\Ece$. We note that this infimum is achieved by the energy of a critical point $u_{\eps}$, that has Morse index at most $1$. Next we let $\eps\to 0$, and consider any (subsequential) varifold limit $V$ of the family $V^{u_{\eps}}$, the varifolds associated to $u_{\eps}$. It then follows that $V\neq 0$ and (as in \cite{Gua}, using \cite{HT}, \cite{Ton} \cite{TonWic}, \cite{Wic}) $\spt{V}$ is smoothly embedded except possibly for a singular set $\text{sing}\,V$ of dimension $\leq n-7$, and $\spt{V}\setminus \text{sing}\,V$ carries locally constant integer multiplicity. We then establish:

\begin{thm}
\label{thm:mult_one_bumpy}
Let $2\leq n\leq 6$ and let $N$ be a compact manifold of dimension $n+1$. There exists a set of Riemannian metrics on $N$ that is generic in the sense of Baire category (specifically, the bumpy metrics of \cite{White1}) such that any varifold $V$ obtained by the minmax in Section \ref{minmax_setup} is the multiplicity-$1$ varifold associated to a smooth (embedded) closed minimal hypersurface $M$.
\end{thm}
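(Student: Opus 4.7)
The plan is a contradiction argument exploiting the freedom to choose the minmax endpoints. Suppose the limit varifold $V$ has a smoothly embedded component $M_0\subset \spt{V}\setminus \text{sing}\,V$ of multiplicity $m_0\geq 2$. The objective is to exhibit, for every sufficiently small $\eps$, an alternative admissible pair of strictly stable critical points $(\tilde v^1_\eps,\tilde v^2_\eps)$ of $\Ece$, together with a continuous path in $W^{1,2}(N)$ joining them, whose maximum Allen--Cahn energy is strictly below $\Ece(u_\eps)$. This contradicts the infimum characterisation of $\Ece(u_\eps)$, so $V$ must have multiplicity $1$ on its smooth part. The smoothness and closedness of the resulting hypersurface $M$ are then immediate from the regularity theory of \cite{HT}, \cite{TonWic}, \cite{Wic} together with the dimensional constraint $n\leq 6$.

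The first subtask is a rigidity statement: any multiplicity-$\geq 2$ component of $V$ must be two-sided and strictly stable. Two-sidedness follows from the structure of Allen--Cahn limits under bounded Morse index (as in the analysis of \cite{B1}). For stability: if $M_0$ admitted a Jacobi eigenfunction $\phi$ with negative eigenvalue, then localising $\phi$ along each of the $m_0$ transition layers of the profile of $u_\eps$ near $M_0$ (via an ansatz of the form $\phi(\pi(x))\,\bar\psi'((d_{M_0}(x)-s_k\eps)/\eps)$, $k=1,\dots,m_0$) would produce $m_0\geq 2$ asymptotically $L^2$-orthogonal directions in which the second variation of $\Ece$ at $u_\eps$ is negative, contradicting $\mathrm{ind}(u_\eps)\leq 1$. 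Combined with the bumpy hypothesis, which forbids nonzero Jacobi fields on any smooth closed minimal hypersurface of $N$, stability of $M_0$ upgrades to strict stability.

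The final step, and the main obstacle, is the construction of the competing pair. Strict stability and two-sidedness of $M_0$ permit (Pacard--Ritor\'{e}-type) the construction, for all small $\eps$, of a sign-changing strictly stable Allen--Cahn critical point $w_\eps$ modelled on $\bar\psi(d_{M_0}/\eps)$, with energy $\sigma|M_0|+o(1)$. One then builds $(\tilde v^1_\eps,\tilde v^2_\eps)$ by gluing $w_\eps$ (or the constants $\pm 1$) near $M_0$ to the structure of $u_\eps$ away from $M_0$, so that the pair encodes all of $V$ except one ``sheet'' across $M_0$; a joining path is obtained by sliding a single heteroclinic transition through $M_0$. Its peak energy is at most $\Ece(u_\eps)-\sigma|M_0|+o(1)<\Ece(u_\eps)$, yielding the contradiction. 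The delicate points, which constitute the heart of the argument, are: (i) preserving strict stability after gluing, crucially using nondegeneracy from the bumpy metric; (ii) producing the joining path explicitly with continuous $W^{1,2}(N)$-dependence and the advertised energy bound; and (iii) iterating the argument when $V$ has several components of multiplicity $\geq 2$, reducing multiplicities one component at a time (e.g.\ ordered by decreasing area).
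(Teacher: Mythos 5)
Your overall strategy---produce, for small $\eps$, a competitor pair of strictly stable critical points whose minmax value undercuts $\mathcal{E}_{\varepsilon}(u_\eps)$, contradicting the infimum characterisation---is indeed one half of the paper's argument. But the case analysis is inverted relative to the paper's, and this puts the entire weight of your proof on a step that does not work. The paper splits according to whether the oriented double cover of a component $M_{j_0}$ is stable or unstable. In the \emph{unstable} case it builds the competitor path (Proposition \ref{Prop:main}): the instability is precisely the mechanism that lowers the energy, since pushing the transition layer along the first eigenfunction of the Jacobi operator strictly decreases area by a definite amount $\varsigma>0$, after which a mean-convex negative gradient flow lands on a stable (hence, by bumpiness, strictly stable) endpoint. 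In the \emph{stable} case no path construction is attempted at all: the paper invokes the bumpy hypothesis to upgrade to strict stability and then cites \cite{GMN}, a local PDE result (proved via curvature estimates, not variationally) asserting that Allen--Cahn solutions accumulating on a two-sided strictly stable minimal hypersurface do so with multiplicity one. Your rigidity step, which argues that a multiplicity-$\geq 2$ component must be strictly stable (by localising a negative Jacobi direction on each of the $m_0$ sheets to contradict $\mathrm{ind}(u_\eps)\leq 1$), funnels you into exactly the stable case --- and there your proposed competitor has no energy-gaining mechanism. Sliding a single transition layer across a strictly stable $M_0$ cannot sweep less than $\mathcal{H}^n(M_0)$ of area at the moment the layer sits on $M_0$; nothing in your construction produces a deficit that is bounded below independently of $\eps$, so the claimed peak bound $\mathcal{E}_{\varepsilon}(u_\eps)-\sigma|M_0|+o(1)$ is unsupported, and in fact no purely variational competitor argument can rule out multiplicity two over a strictly stable hypersurface (the obstruction there is structural, which is why \cite{GMN} is needed).

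There are secondary gaps as well. Two-sidedness of a multiplicity-$\geq 2$ component is asserted without justification (one-sided components with even multiplicity are not excluded a priori; the correct object throughout is the oriented double cover, as in Lemma \ref{lem:stable_implies_strictly}). The gluing of a Pacard--Ritor\'{e} solution near $M_0$ to ``the structure of $u_\eps$ away from $M_0$'' so as to obtain genuine critical points that are moreover \emph{strictly stable} is a substantial fixed-point/nondegeneracy construction that is named but not carried out, and ``the structure of $u_\eps$ away from $M_0$'' is not itself a critical point of anything. To repair the proof you would keep your contradiction framework but split on stability of the double cover: import \cite{GMN} (plus Lemma \ref{lem:stable_implies_strictly_2}) for the stable case, and in the unstable case replace your gluing by the explicit path of Proposition \ref{Prop:main} --- truncated one-dimensional profiles transported along the graphs $\iota+(c+t\tilde{\phi})\nu$, with the energy drop coming from Lemma \ref{lem:unstable_addcnst_region} and the terminal strictly stable endpoint from the mean-convex gradient flow.
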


When $n=2$, Theorem \ref{thm:mult_one_bumpy} follows from the more general result in \cite{ChoMan} (that applies under a uniform bound on the Morse index and on the energy). The multiplicity-$1$ information in Theorem \ref{thm:mult_one_bumpy} implies immediately that $M$ in Theorem \ref{thm:mult_one_bumpy} is the (common) boundary of two disjoint open sets and therefore it is a two-sided hypersurface; moreover, it has Morse index $1$ (by the bumpy metric assumption). In particular, Theorem \ref{thm:mult_one_bumpy} provides a proof of the following geometric result (that also follows from \cite{Zhou3}, which employs an Almgren--Pitts framework).

\begin{cor}
\label{cor:two_sided_bumpy}
Let $2\leq n\leq 6$. In any compact Riemannian manifold of dimension $n+1$ endowed with a bumpy (thus generic) metric there exists a (smoothly embedded) closed, two-sided minimal hypersurface, with Morse index $1$.
\end{cor}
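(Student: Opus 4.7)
The plan is to deduce the corollary directly from Theorem~\ref{thm:mult_one_bumpy}, which produces a smooth embedded closed minimal hypersurface $M$ supporting $V$ with multiplicity $1$. What remains is to verify two-sidedness and that the Morse index of $M$ equals $1$.

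For two-sidedness I would invoke the Hutchinson--Tonegawa structure used in the construction: any subsequential $BV$-limit $u_\infty$ of the minmax solutions $u_\eps$ has the form $\chi_E-\chi_{N\setminus E}$ for a Caccioppoli set $E\subset N$, and the limit varifold is carried by $\partial^*E$ with integer density. Theorem~\ref{thm:mult_one_bumpy} forces this density to equal $1$ on $M$, so $M$ (entirely regular because $n\leq 6$) coincides with $\partial E$. As the common boundary of the disjoint open sets $\mathrm{int}(E)$ and $N\setminus\overline{E}$, $M$ is two-sided.

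For the Morse index, each $u_\eps$ has Allen--Cahn Morse index at most $1$ by the mountain-pass construction, and under multiplicity-$1$ two-sided convergence the Morse index of the limit hypersurface is bounded above by $\liminf_\eps\mathrm{index}(u_\eps)\leq 1$, via the standard eigenvalue correspondence between the linearised Allen--Cahn operator at $u_\eps$ and the Jacobi operator on $M$ (mediated by the $1$D heteroclinic profile in the normal direction). Bumpiness makes $M$ nondegenerate, so the correspondence is two-sided and the index converges exactly. For the matching lower bound $\mathrm{index}(M)\geq 1$ I would use the infimum property of the minmax: if $M$ were stable, nondegeneracy would make it strictly stable, and an Allen--Cahn neighbourhood construction around $M$ would produce strictly stable critical points $\widetilde v_\eps$ concentrating on $M$; using $\widetilde v_\eps$ as a new valley in the minmax of Section~\ref{minmax_setup} would then yield a saddle of strictly smaller energy than $\Ece(u_\eps)$, contradicting the infimum that defines $u_\eps$.

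The main obstacle I anticipate is this lower bound: producing $\widetilde v_\eps$ with sharp energy control, and exhibiting a continuous $W^{1,2}$-path from $\widetilde v_\eps$ to one of the original valleys whose maximum Allen--Cahn energy lies strictly below $\Ece(u_\eps)$. A quantitative version of the energy drop associated with strict stability is required, kept consistent with the energy estimates of \cite{HT,Ton,TonWic,Wic} that underlie Theorem~\ref{thm:mult_one_bumpy}; the analogous step in the positive Ricci setting appears in \cite{B1}.
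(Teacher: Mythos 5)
Your two-sidedness argument is exactly the paper's: multiplicity one forces the $BV$-limit $u_\infty$ of the $u_\eps$ to be of the form $\chi_E-\chi_{N\setminus E}$ with $M=\partial E$, and the inner normal of $E$ orients $M$. Likewise the upper bound $\text{index}(M)\leq\liminf_\eps \text{index}(u_\eps)\leq 1$ via the spectral correspondence of \cite{Gas}, \cite{Hie} is what the paper uses. These parts are fine.

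The lower bound $\text{index}(M)\geq 1$ is where your proposal has a genuine gap. The contradiction you describe cannot materialise. If $M$ were strictly stable, a strictly stable critical point $\widetilde v_\eps$ concentrating on $M$ would have $\Ece(\widetilde v_\eps)\to\mathcal{H}^n(M)=\|V\|(N)=\lim_\eps\Ece(u_\eps)$ (this equality is precisely what multiplicity one gives), and every admissible path \emph{ending} at $\widetilde v_\eps$ trivially has maximal energy $\geq \Ece(\widetilde v_\eps)$; the mountain-pass gap then makes the minmax value of any pair containing $\widetilde v_\eps$ strictly \emph{larger} than $\Ece(\widetilde v_\eps)\approx\Ece(u_\eps)$. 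Adding a valley at the height of the saddle can never lower the infimum over pairs, so no ``saddle of strictly smaller energy'' appears. Contrast this with how Proposition \ref{Prop:main} is used: there the path climbs only to about $2\mathcal{H}^n(M)-\varsigma$ (the instability of the double cover buys the drop $-\varsigma$ from the \emph{doubled} area), and the contradiction is with $\|V\|(N)\geq q\,\mathcal{H}^n(M)\geq 2\mathcal{H}^n(M)$ when the multiplicity $q$ is at least $2$; for a multiplicity-one, stable $M$ there is no analogous path dipping below $\mathcal{H}^n(M)$, since any path must end at energy $\approx\mathcal{H}^n(M)$. The paper's actual route to the index lower bound is spectral, not variational: under non-degenerate multiplicity-one convergence the eigenvalue correspondence you already set up works in both directions, so the index carried by the mountain-pass solutions $u_\eps$ can fail to persist in the limit only by being absorbed into the nullity of $M$ (``the only way for the index of $M$ to vanish is that $M$ has non-trivial nullity''); bumpiness kills the nullity, hence $\text{index}(M)=1$. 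You should discard the valley-comparison argument and instead complete the semicontinuity argument you had already begun.
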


\begin{oss}
It also follows, under the assumptions of Theorem \ref{thm:mult_one_bumpy}, that the critical points $u_{\eps}$ employed in the construction of $V$ can themselves be obtained as mountain pass solutions, for the class of admissible paths that join two (suitably chosen) strictly stable critical points; moreover, there is an optimal path, i.e.~one for which the maximum of $\Ece$ is achieved at $u_{\eps}$.
\end{oss}

In Section \ref{minmax_setup} we describe the minmax construction. In Section \ref{estimate} we identify the key estimate (Proposition \ref{Prop:main}) to which Theorem \ref{thm:mult_one_bumpy} can be reduced. The proof of this estimate will be given in Section \ref{proof_main}, after some preliminary work in Section \ref{preliminaries}. Once the minmax in Section \ref{minmax_setup} is in place, the proof of Theorem \ref{thm:mult_one_bumpy} is a variant of the one developed in \cite{B1}, in which positiveness of the Ricci curvature is assumed for compact Riemannian manifolds of dimension $3$ or higher. In the present setting, we rely on the dimensional restriction in order to have the smoothness of the support of the minmax varifold and the genericity of metrics for which every smooth minimal hypersurface admits no non-trivial Jacobi fields (these are the so-called bumpy metrics, proved to be generic by White in \cite{White1}). Moreover, we rely on the recent result \cite{GMN} by Guaraco--Marques--Neves, which implies that the orientable double cover of the (smooth) minmax minimal hypersurface, onto which index-$1$ Allen--Cahn solutions accumulate, cannot be strictly stable (and hence it is unstable under a bumpy metric assumption). With \cite{GMN} in mind, the statement of Theorem \ref{thm:mult_one_bumpy} could be viewed as an Allen--Cahn counterpart of the result in \cite[Section 1.6]{MarNev} on one-parameter Almgren--Pitts minmax.

\section{The minmax construction}
\label{minmax_setup}

Let $N$ be a compact Riemannian manifold of dimension $n+1$, $n\geq 2$. For $\eps\in(0,1)$ consider the Allen--Cahn energy 
$$\Ece(u)=\frac{1}{2\sigma}\int_N \eps\frac{|\nabla u|^2}{2} + \frac{W(u)}{\eps}$$
on the Hilbert space $W^{1,2}(N)$, where $W$ is a $C^2$ double well potential with non-degenerate minima at $\pm 1$, for example $W(x)=\frac{(1-x^2)^2}{4}$, suitably modified (for technical reasons) outside $[-2,2]$--- a typical choice, that we also follow, is to impose quadratic growth to $\infty$ (others are also possible), and $\sigma$ is a normalization constant, $\sigma=\int_{-1}^1 \sqrt{W(t)/2}\,dt$. We recall that the Euler--Lagrange equation for $\Ece$ is the semi-linear elliptic PDE $\eps \Delta u - \frac{W'(u)}{\eps} =0$ (where $\Delta$ denotes the Laplace--Beltrami operator on $N$), and that the second variation of $\Ece$ at $u$ is given by the quadratic form $\Ece''(u)(\phi,\phi)=\int_N \eps |\nabla \phi|^2 + \frac{W''(u)}{\eps}\phi^2$ for $\phi\in C^\infty(N)$. Stability amounts to $\Ece''(u)(\phi,\phi)\geq 0$ for all $\phi$, while strict stability means $\Ece''(u)(\phi,\phi)>0$ for all $\phi\neq 0$.

\medskip

For any continuous path in $W^{1,2}(N)$ that starts at the constant $-1$ and ends at the constant $+1$ there exists $\alpha>0$ such that the maximum of $\Ece$ on the path is $\geq \alpha$. This is proved in \cite{Gua}. (In fact the constant $\alpha$ is independent of $\eps$.) The constants $\pm 1$ are strictly stable critical points of $\Ece$ (they are also the only global minimizers and $\Ece(\pm 1) =0$).

For any strictly stable critical point $v$ of $\Ece$ we have a neighbourhood of $v$ in $W^{1,2}(N)$ in which $v$ achieves the (strict) minimum of $\Ece$. This follows from the Morse--Palais lemma (see e.g.~\cite[Lemma 7.3.1]{Jost}) for non-degenerate critical points of smooth functionals on Banach spaces. Consider a continuous path $\gamma:[a,b]\to W^{1,2}(N)$ such that $\gamma(a)$ and $\gamma(b)$ are strictly stable critical points of $\Ece$. Then the maximum of $\Ece$ on $\gamma$ has to be strictly greater than $\max\{\gamma(a), \gamma(b)\}+\delta$, where $\delta>0$ is independent of $\gamma$.

For any pair of strictly stable critical points $v_1$, $v_2$ of $\Ece$ we define the class of paths $\Gamma_{v_1, v_2}$ to be the collection of continuous paths $\gamma:[a,b]\to W^{1,2}(N)$ with endpoints $\gamma(a), \gamma(b)$ respectively equal to $v_1$ and $v_2$. The previous considerations guarantee that we have a ``mountain pass condition'', i.e.~there is a value $C_{v_1, v_2}>\max\{\Ece(v_1),\Ece(v_2)\}$ and such that for every $\gamma\in\Gamma_{v_1, v_2}$ we have $\max_{t\in[a,b]} \Ece(\gamma_t)\geq C_{v_1,v_2}$. Moreover, the Palais--Smale condition is satisfied by $\Ece$ (by the argument in \cite[Proposition 4.4 (ii)]{Gua}). This allows the use of a standard mountain pass theorem and yields the existence of a minmax solution to $\Ece'=0$ with Morse index $\leq 1$ and whose Allen--Cahn energy realises the minmax value $\min_{\gamma\in \Gamma_{v_1,v_2}} \max_{t\in[a,b]} \Ece(\gamma_t)$. (For example, see \cite{Struwe}.)

\medskip

Given $\eps\in(0,1)$ we will denote by $\Se$ the collection of strictly stable critical points of $\Ece$ on $N$. We will consider, for any $v_{\eps}^1, v_{\eps}^2 \in \Se$, $v_{\eps}^1\neq  v_{\eps}^2$, the admissible class of paths $\Gamma_{v_{\eps}^1, v_{\eps}^2}$: the mountain pass theorem yields (as just described) a critical point $u_{(v_{\eps}^1, v_{\eps}^2)}$ (of $\Ece$) with Morse index $\leq 1$ and with 
$$\Ece(u_{(v_{\eps}^1, v_{\eps}^2)})=\min_{\gamma\in \Gamma_{v_{\eps}^1, v_{\eps}^2}}\,\, \max_{t\in[a,b]} \Ece(\gamma_t).$$
We now \textit{``optimise''} the choice of the valley points $v_{\eps}^1, v_{\eps}^2$: as $v_{\eps}^1 \neq v_{\eps}^2$ vary in $\Se$, we consider the ``infimum of the minmax values'', namely 
$$\inf\limits_{\tiny{\begin{array}{ccc}
         v_{\eps}^1, v_{\eps}^2 \in \Se\\
         v_{\eps}^1\neq v_{\eps}^2
        \end{array}}} \Ece(u_{(v_{\eps}^1, v_{\eps}^2)})=\inf\limits_{\tiny{\begin{array}{ccc}
         v_{\eps}^1, v_{\eps}^2 \in \Se\\
         v_{\eps}^1\neq v_{\eps}^2
        \end{array}}}\min_{\gamma\in \Gamma_{v_{\eps}^1, v_{\eps}^2}} \,\,\max_{t\in[a,b]} \Ece(\gamma_t);$$
we will now check that there exists a critical point $u_{\eps}$ of $\Ece$ such that 
\be
\label{eq:inf_u_eps}
\Ece(u_{\eps})=\inf\limits_{\tiny{\begin{array}{ccc}
         v_{\eps}^1, v_{\eps}^2 \in \Se\\
         v_{\eps}^1\neq v_{\eps}^2
        \end{array}}} \Ece(u_{(v_{\eps}^1, v_{\eps}^2)}).
\ee
Indeed, taking an infimizing sequence $(v_{\eps}^1, v_{\eps}^2)_\ell$ for $\ell \to \infty$, we have a uniform bound on $\Ece(u_{{(v_{\eps}^1, v_{\eps}^2)}_\ell})$ along the sequence and thus a uniform $W^{1,2}$ bound on $u_{{(v_{\eps}^1, v_{\eps}^2)}_\ell}$; we first extract a weak $W^{1,2}$-limit of $u_{{(v_{\eps}^1, v_{\eps}^2)}_\ell}$, as $\ell\to \infty$, that we denote by $u_{\eps}$; by passing to the limit in the weak version of $\Ece'=0$ we obtain that $u_{\eps}$ is a weak solution to the Allen--Cahn equation; then we show that the convergence to $u_{\eps}$ is strong in $W^{1,2}$ by the stationarity assumption $\Ece'\left(u_{{(v_{\eps}^1, v_{\eps}^2)}_\ell}\right)=0$ (the computation is again the same as in \cite[Proposition 4.4 (ii)]{Gua}). Elliptic theory guarantees smoothness of $u_{\eps}$ and the fact that it solves $\Ece'(u_{\eps})=0$ in the strong sense. Moreover, the minimizing sequence $u_{{(v_{\eps}^1, v_{\eps}^2)}_\ell}$ converges (as $\ell\to \infty$) to $u_{\eps}$ in $C^k(N)$ for any $k\in \N$, by elliptic estimates, and $\Ece(u_{{(v_{\eps}^1, v_{\eps}^2)}_\ell})\to \Ece(u_{\eps})$. 

We notice that $u_{\eps}$ has Morse index $\leq 1$; this follows from the Rayleigh quotient characterisation of the eigenvalues (see e.g.~\cite[(3.21)]{Hie}), from the strong convergence of $u_{{(v_{\eps}^1, v_{\eps}^2)}_\ell}$ to $u_{\eps}$ and from the fact that each $u_{{(v_{\eps}^1, v_{\eps}^2)}_\ell}$ has Morse index $\leq 1$ for each $\ell$. (It suffices to prove that if $f_\ell\to f_\infty$ in $C^2$ and $\ca{E'}{\eps}(f_\ell)=0$ for $\ell\in \N \cup \{\infty\}$, denoting by $\lambda_p^{f_\ell}$ the $p$-th eigenvalue of the Jacobi operator associated to $\Ece$ and $f_\ell$, for $\ell\in \N \cup \{\infty\}$, then $\limsup_{\ell\to \infty} \lambda_p^{f_\ell} \leq \lambda_p^{f_\infty}$.)

\medskip

\noindent \textit{Associated varifolds}. In order to produce candidate minimal hypersurfaces (i.e.~stationary integral varifolds) we follow the construction in \cite{HT}. Given a smooth function $u:N\to\R$ we let $w = \Phi(u)$, where $\Phi(s)=\int_0^s \sqrt{W(t)/2}\,dt$. We denote by $\sigma$ the normalization constant $\int_{-1}^1 \sqrt{W(t)/2}\,dt$. Then we define the $n$-varifolds 
$$V^{u}(A) = \frac{1}{\sigma}\int_{-\infty}^\infty V_{\{w=t\}}(A) \,dt,$$
where $A\subset G_n(N)$ and $V_{\{w=t\}}$ denotes, for a.e.~$t$, the varifold of integration on the (smooth) level set ${\{w=t\}}$. If, for $\eps=\varepsilon_j \to 0^+$ the funtions $u_{\eps}$ are critical points of $\Ece$ and $\Ece(u_{\eps})$ is uniformly bounded, then the analysis in \cite{HT} gives that $V^{u_{\eps}}$ converge subsequentially, as $\eps \to 0$, to an integral $n$-varifold $V$ with vanishing first variation. Moreover $\Ece(u_{\eps})\to \|V\|(N)$, the total mass of $V$. 

\medskip

\noindent \textit{Upper and lower energy bounds.} We have positive upper and lower bounds on $\Ece(u_{\eps})$ as $\eps\to 0$, for the critical points $u_{\eps}$ constructed in (\ref{eq:inf_u_eps}). For the upper bound this follows from the upper bound (independent of $\eps$) obtained in \cite{Gua} for $\limsup_{\eps\to 0} c_{\eps}$, where $c_{\eps}$ is the minmax value obtained by employing the class of paths $\Gamma_{-1,+1}$, together with the infimum characterisation of $u_{\eps}$ in (\ref{eq:inf_u_eps}). The lower bound follows from the lower bound obtained in \cite{Gua} for $\liminf_{\eps\to 0} c_{\eps}$, together with the following observation. 
If $\Ece(v_{\eps})\to 0$ for a sequence of critical points ($\Ece'(v_{\eps})=0$) with $\eps=\varepsilon_j\to 0^+$, then for all sufficiently large $j$ we have $v_{\eps}\equiv -1$ or $v_{\eps}\equiv +1$.
This is proved by a blow up argument, as in \cite{BW3}. (It suffices to prove that if $\Ece(v_{\eps})\to 0$ then $\{v_{\eps}=0\}=\emptyset$ for sufficiently small $\eps$; then the maximum principle gives the conclusion. Arguing by contradiction, we let $x_j\in \{v_{{\varepsilon}_j}=0\}$ and define $\tilde{v}_j(\cdot)=v_{{\varepsilon}_j}\left(\frac{\cdot - x_j}{{\varepsilon}_j}\right)$. Sending $j\to \infty$ we obtain an entire solution $v:\R^{n+1}\to \R$ to $\ca{E'}{1}(v)=0$ with $v(0)=0$ and $\ca{E}{1}(v)=0$, contradiction.) This observation equivalently says that there exist $\eps_0>0$ and $C>0$ such that if $v_{\eps}$ is a critical point of $\Ece$ for $\eps\leq \eps_0$ and $v_{\eps}\not\equiv -1$, $v_{\eps}\not\equiv -1$, then $\Ece(v_{\eps})\geq C$. For the construction above (see the discussion preceding (\ref{eq:inf_u_eps}), if $\eps\leq \eps_0$ and at least one between $v_{\eps}^1, v_{\eps}^2$ is not $\pm 1$, then we have $\Ece(u_{(v_{\eps}^1, v_{\eps}^2)})\geq \max\{\Ece(v_{\eps}^1), \Ece(v_{\eps}^2)\}\geq C>0$. If $v_{\eps}^1, v_{\eps}^2$ are the constants $-1, +1$, on the other hand, then by \cite{Gua} for all sufficiently small $\eps$ we have $\Ece(u_{(-1, +1)})\geq \frac{1}{2}\liminf_{\eps\to 0} c_{\eps}>0$. Therefore a positive lower bound for $\liminf_{\eps\to 0} \Ece(u_{(v_{\eps}^1, v_{\eps}^2)})$ exists.

\medskip

\noindent \textit{Varifold limit and regularity.} Following \cite{Gua} we consider any (subsequential) varifold limit of $V^{u_{\eps}}$ as $\eps \to 0$; for $n\geq 2$ we get that $\spt{V}$ is a smoothly embedded minimal hypersurface except possibly for a closed singular set of dimension $\leq n-7$. This follows upon noticing that the uniform bound on the Morse index of $u_{\eps}$ allows to reduce \textit{locally} in $N$ to the case in which $u_{\eps}$ are stable so that the regularity results in \cite{TonWic}, \cite{Wic} apply. Note that $V\neq 0$ by the lower energy bound on $u_{\eps}$.
In other words, 
\be
\label{eq:V_M_j}
V=\sum_{j=1}^K q_j |M_j|,
\ee
with $q_j\in \N$ and $M_j$ minimal and smoothly embedded
away from a set of dimension $\leq n-7$ ($|M_j|$ denotes the multiplicity-$1$ varifold of integration on $M_j$).

\begin{oss}
The observation that $u_{\eps}$ has Morse index $\leq 1$ simplifies the exposition, however an alternative way to carry out the construction would be to consider a diagonal sequence of $u_{(v^1_{{\eps}}, v_{\eps}^2)}$ as $\eps\to 0$ such that the varifolds $V^{u_{(v^1_{{\eps}}, v_{\eps}^2)}}$ converge to the same limit $V$ as the varifolds $V^{u_{\eps}}$. Then the regularity of $V$ could be obtained from the knowledge that the Morse index of $u_{(v^1_{{\eps}}, v_{\eps}^2)}$ is $\leq 1$.
\end{oss}

\medskip

The previous construction can be carried out for any $n\geq 2$ and any Riemannian metric on $N$. In the case $n\leq 6$ that we will be intersted in, all the $M_j$'s obtained in (\ref{eq:V_M_j}) are completely smooth. The scope, in the remainder of this work, is to prove that if $2\leq n\leq 6$ and the metric is bumpy, then all the multiplicities $q_j$ in (\ref{eq:V_M_j}) must be equal to $1$. This will establish Theorem \ref{thm:mult_one_bumpy}.

\begin{oss}
\label{oss:index_u_eps}
It is also true (see Remark \ref{oss:optimalpath}) that if $2\leq n\leq 6$ and the metric is bumpy, then $u_{\eps}$ itself can be obtained, for all sufficiently small $\eps$, by a minmax in the class $\Gamma_{v_{\eps}^1, v_{\eps}^2}$ for a suitable choice of $v_{\eps}^1\neq v_{\eps}^2 \in \Se$, in particular
$$\Ece(u_{\eps})=\min\limits_{\tiny{\begin{array}{ccc}
         v_{\eps}^1, v_{\eps}^2 \in \Se\\
         v_{\eps}^1\neq v_{\eps}^2
        \end{array}}} \Ece(u_{(v_{\eps}^1, v_{\eps}^2)})=\min\limits_{\tiny{\begin{array}{ccc}
         v_{\eps}^1, v_{\eps}^2 \in \Se\\
         v_{\eps}^1\neq v_{\eps}^2
        \end{array}}} \min_{\gamma\in \Gamma_{v_{\eps}^1, v_{\eps}^2}} \,\,\max_{t\in[a,b]} \Ece(\gamma_t).$$
\end{oss}
\section{Key estimate and proof of Theorem \ref{thm:mult_one_bumpy}}
\label{estimate}

Theorem \ref{thm:mult_one_bumpy} will follow mainly from the following key estimate (in which the dimensional restriction is absent in view of the fact that $M$ is assumed to be smoothly embedded).

\begin{Prop}
\label{Prop:main}
Let $N$ be a compact Riemannian manifold of dimension $n+1$ with $n\geq 2$ and let $M\subset N$ be a smoothly embedded, closed minimal hypersurface, whose oriented double cover is unstable. There exists $\varsigma>0$ (depending only on $M\subset N$) and $\eps_0>0$ such that for every $\eps<\eps_0$ there exist a stable Allen--Cahn critical point $v_{\eps}$ ($\ca{E'}{{\eps}}(v_{\eps})=0$, $\ca{E''}{{\eps}}(v_{\eps})\geq 0$) and a continuous path $\gamma:[a,b]\to W^{1,2}(N)$ with $\gamma(a)=-1$ and $\gamma(b)=v_{\eps}$ such that
$$\max_{t\in[a,b]}\Ece(\gamma(t))\leq 2\mathcal{H}^n(M)-\varsigma.$$
\end{Prop}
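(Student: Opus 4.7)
The plan is to construct $\gamma$ as a concatenation of a ``bubble-opening'' homotopy from the constant $-1$ to an explicit Allen--Cahn competitor $\tilde v_\eps$, followed by the negative $W^{1,2}$-gradient flow of $\Ece$ applied to $\tilde v_\eps$. The first piece controls the mountain-pass height; by the Palais--Smale property of $\Ece$, the second piece (being energy-decreasing) subconverges to a critical point $v_\eps$, and by staying in a sub-level set below any potential $1$-saddle value we land in a local minimum, hence $v_\eps$ is stable. The instability of $\tilde M\to M$ is what lets us position the two ``sheets'' representing the formal $2|M|$ configuration along perturbed positions whose total area is strictly below $2\mathcal{H}^n(M)$, which is what drives the energy estimate.

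\textbf{Perturbed double interface.} Let $\tilde\phi$ be an eigenfunction of the Jacobi operator of $\tilde M$ for a negative eigenvalue, chosen with constant sign (first eigenfunction). Working in a tubular neighborhood of $M\subset N$ in the two-sided case, and in the double cover of such a neighborhood in the one-sided case, $\tilde\phi$ produces a pair of disjoint normal graphs $M_+,M_-$ over $M$ of small amplitude $\tau>0$ such that, by minimality of $M$ (vanishing first variation) and the negativity of the quadratic form $Q_{\tilde M}$ on $\tilde\phi$,
\[
\mathcal{H}^n(M_+)+\mathcal{H}^n(M_-)\ \le\ 2\mathcal{H}^n(M)-2\varsigma_0
\]
for some constant $\varsigma_0=\varsigma_0(M,N)>0$ independent of $\eps$. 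This is the central geometric input.

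\textbf{Path construction.} In Fermi coordinates $(y,t)$ in the relevant tubular neighborhood, I parameterize two moving interfaces by $t=a(y,s)$ and $t=b(y,s)$, interpolating between $(a,b)(\cdot,0)=(-L,L)$ (sheets at ``infinity'', so the associated function is $W^{1,2}$-close to $-1$) and $(a,b)(\cdot,1)=(\tau\tilde\phi,-\tau\tilde\phi)$ (which realizes $M_+\cup M_-$), with $L\gg\eps$. I set
\[
\tilde\gamma_\eps(s)(y,t)\ =\ \mathbb{H}\bigl(\min(a(y,s)-t,\,t-b(y,s))/\eps\bigr),
\]
cut off smoothly to $-1$ outside the Fermi chart, where $\mathbb{H}$ is the one-dimensional heteroclinic profile of $W$. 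The path starts (after a short linear segment) at $-1$ and ends at $\tilde v_\eps:=\tilde\gamma_\eps(1)$. Standard Modica--Mortola estimates (cf.\ \cite{HT}) give $\Ece(\tilde\gamma_\eps(s))\le \mathcal{H}^n(M_+)+\mathcal{H}^n(M_-)+O(\eps)$ along the whole family, with the maximum attained at $s=1$ since the bubble width (and hence the area of active interface) grows monotonically in $s$: for $s<s_*$ the bubble is empty and $\tilde\gamma_\eps(s)\approx -1$ with energy $\approx 0$, passing through a ``tent'' configuration at $s=s_*$ of energy $\approx\mathcal{H}^n(M)$, and growing to the full perturbed bubble at $s=1$ of energy $\le 2\mathcal{H}^n(M)-2\varsigma_0+O(\eps)$. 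Appending the negative gradient flow from $\tilde v_\eps$, which preserves the upper bound and converges to a stable critical point $v_\eps$ with $\Ece''(v_\eps)\ge 0$, and choosing $\eps<\eps_0$ so that $O(\eps)<\varsigma_0$, gives the proposition with $\varsigma:=\varsigma_0$.

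\textbf{Main obstacle.} The principal subtlety is the one-sided case: there is no global ``upper'' vs.\ ``lower'' side of $M$ in $N$, so the Fermi-coordinate parameterization used above must be lifted to a double cover of a tubular neighborhood and then descended via deck-invariance. This forces a parity condition on $\tilde\phi$ with respect to the deck involution, which one meets by invoking the invariant/anti-invariant decomposition of the Jacobi eigenspaces on $\tilde M$ and choosing $\tilde\phi$ in the component producing an $\iota$-invariant bubble. Equivalently, one may construct the two-sheet competitor directly in $N$ as a single connected perturbed interface double-covering $M$; either route requires redoing the monotonicity of the bubble-energy profile in the twisted setting. A secondary technical point is the uniform control of the $O(\eps)$ error in the Modica--Mortola estimate independently of the perturbation amplitude $\tau$, so that $\varsigma_0$ can be fixed based on the geometry of $M\subset N$ alone before letting $\eps\to 0$.
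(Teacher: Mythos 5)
Your overall architecture (explicit sweepout from $-1$ to a two-sheeted competitor, then negative gradient flow to a stable critical point) and your geometric input (the perturbed sheets at $\pm\tau\tilde\phi$ having total area $\leq 2\mathcal{H}^n(M)-2\varsigma_0$ by instability) match the paper, but the pivotal step of your energy estimate --- that the maximum of $\Ece$ along the bubble-opening family is attained at $s=1$ because ``the area of active interface grows monotonically in $s$'' --- is false, and it fails at exactly the point the proposition is about. Follow your own path just after the two interfaces $t=a(y,s)$ and $t=b(y,s)$ cross: when their separation is $2\delta$ with $\eps|\log\eps|\ll\delta\ll 1$, the configuration consists of two fully formed transition layers sitting on the level sets $\{t=\pm\delta\}$ of the distance to $M$. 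Since $M$ is minimal, each such level set has area $\mathcal{H}^n(M)+O(\delta^2)$, and the interaction between the two layers is $O(e^{-c\delta/\eps})$, so at that instant $\Ece = 2\mathcal{H}^n(M)-o_\eps(1)$. The gain $2\varsigma_0$ only materialises after the sheets have travelled the macroscopic distance $\tau$, which cannot happen while their separation is still small; hence $\max_s\Ece(\tilde\gamma_\eps(s))\to 2\mathcal{H}^n(M)$ as $\eps\to 0$ and no $\eps$-independent $\varsigma$ exists for your path. The idea you are missing is the paper's ``hole at $B$'': by Lemma \ref{lem:unstableregion} the destabilising function $\tilde\phi$ can be chosen to vanish on a ball $D\supset B$, so one first opens the double interface \emph{punctured over $B$} (keeping $\Ece\leq 2\mathcal{H}^n(M)-2\mathcal{H}^n(B)+O(\eps|\log\eps|)$ throughout the opening, Section \ref{2M-2B}), then slides the punctured sheets outward along $\tilde\phi$ (Lemma \ref{lem:unstable_addcnst_region}(i)), and only then closes the hole, at which stage the full double graph already has area $\leq 2\mathcal{H}^n(M)-\tau$ (Lemma \ref{lem:unstable_addcnst_region}(ii)).

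A second, independent gap is the endpoint of the flow. Palais--Smale gives only subconvergence of the negative gradient trajectory, whereas you need genuine convergence as $t\to\infty$ (so that the trajectory together with its limit is a continuous path into $W^{1,2}(N)$) and, crucially, that the limit is not the constant $-1$ --- otherwise the path is useless in the proof of Theorem \ref{thm:mult_one_bumpy}, where $(-1,v_\eps)$ must be a pair of \emph{distinct} valley points. Your appeal to ``staying below any potential $1$-saddle value'' establishes neither: sublevel sets of mountain-pass type functionals are not invariant regions separating basins of attraction, and nothing prevents the flow from draining back to $-1$. The paper secures both points by flowing the tilted functional $\ca{F}{\eps,\mu}$ from a ``mean-convex'' initial datum (so the flow is monotone, hence convergent to a stable solution) and by placing beneath the initial datum a barrier $m$ modelled on the strictly mean-convex graph of $z_0\eta$; comparison with the flow from $m$ forces the limit to stay above $1/2$ on a fixed neighbourhood of $M$.
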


\noindent Additionally we will need the following lemma, whose proof (see Appendix \ref{double_cover}) follows from Lemma \ref{lem:stable_implies_strictly} and from \cite{GMN}.

\begin{lem}
\label{lem:stable_implies_strictly_2}
Let $N$ be a compact Riemannian manifold of dimension $n+1$ with $2\leq n\leq 6$, endowed with a bumpy metric. For any $K>0$ there exists $\epsilon_0>0$ such that if $\eps\in(0,\epsilon_0)$ and $v_{{\eps}}:N\to \R$ is a stable critical point of $\Ece$ with $\Ece(v_{\eps})\leq K$, then $v_{{\eps}}$ is strictly stable.
\end{lem}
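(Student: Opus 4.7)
The plan is to argue by contradiction: suppose there exist $K>0$, a sequence $\eps_j\to 0^+$, and stable critical points $v_{\eps_j}$ of $\ca{E}{\eps_j}$ with $\ca{E}{\eps_j}(v_{\eps_j})\leq K$ that fail to be strictly stable. The idea is to pass to the varifold limit, transfer stability of $v_{\eps_j}$ to the geometric limit by \cite{GMN}, upgrade stability to \emph{strict} stability using the bumpy-metric hypothesis, and then apply Lemma \ref{lem:stable_implies_strictly} in the reverse direction to contradict the failure of strict stability for $v_{\eps_j}$.

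First I would extract (along a subsequence) a varifold limit $V$ of $V^{v_{\eps_j}}$; the Hutchinson--Tonegawa theory ensures $V$ is an integral $n$-varifold with vanishing first variation, and the stability of each $v_{\eps_j}$ makes the stability-based regularity results of Tonegawa--Wickramasekera \cite{TonWic} and Wickramasekera \cite{Wic} locally applicable. In the dimensional range $n\leq 6$ this forces $\spt{V}$ to be a smooth closed embedded minimal hypersurface $\bigcup_i M_i$, with $V=\sum_i q_i |M_i|$ for positive integers $q_i$.

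Next I would invoke \cite{GMN} to pass the stability of $v_{\eps_j}$ to the varifold limit: the oriented double cover of each component $M_i$ must be a stable minimal hypersurface in $N$. Since the metric is bumpy in the sense of \cite{White1}, no closed smoothly immersed minimal hypersurface admits a nontrivial Jacobi field; applied to the oriented double cover of each $M_i$ (including the one-sided case, which is covered by White's notion of bumpy), this rules out any kernel of the geometric Jacobi operator. Together with stability, this upgrades each oriented double cover to being \emph{strictly} stable.

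Finally, Lemma \ref{lem:stable_implies_strictly} supplies the reverse implication: a sequence of stable AC critical points whose associated varifolds converge to a varifold supported on a smooth minimal hypersurface whose oriented double cover is strictly stable must itself be strictly stable for all sufficiently small $\eps$. Applied to $v_{\eps_j}$, this contradicts the standing failure of strict stability and concludes the argument. The main technical point will be to ensure that \cite{GMN} transfers stability of $v_{\eps_j}$ to \emph{each} $M_i$'s oriented double cover (in particular handling one-sided components correctly), since it is precisely the double-cover formulation that couples to the bumpy-metric absence of Jacobi fields and to the hypothesis of Lemma \ref{lem:stable_implies_strictly}.
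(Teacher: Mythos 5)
Your overall contradiction scheme and the first three steps (extract a varifold limit, get smoothness of $\spt{V}$ from \cite{TonWic}, \cite{Wic} in the range $n\leq 6$, pass stability to the oriented double cover of each component, and then use the bumpy hypothesis to upgrade that to strict stability of the double cover) match the paper. Two remarks on attribution: the step ``stability of $v_{\eps_j}$ passes to stability of the oriented double cover'' is exactly the content of Lemma \ref{lem:stable_implies_strictly} (proved in the appendix via Gaspar's second-inner-variation result and an even/odd decomposition on the double cover), not of \cite{GMN}; the role of \cite{GMN} comes later.

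The genuine gap is in your closing step. You assert that Lemma \ref{lem:stable_implies_strictly} ``supplies the reverse implication'' --- that strict stability of the double covers of the limit components forces strict stability of $v_{\eps_j}$ for small $\eps$. It does not: that lemma is a one-way statement (Allen--Cahn stability of the approximating sequence implies geometric stability of the double covers of the limit), and there is no reverse direction to quote. The reverse direction is genuinely nontrivial, because if the convergence were with higher multiplicity the linearized Allen--Cahn operator at $v_{\eps_j}$ could acquire extra low eigenvalues not seen by the Jacobi operator of the limit, so strict stability of the limit alone controls nothing. The paper closes the argument precisely here by invoking \cite{GMN}: since every double cover is strictly stable, \cite{GMN} gives that $V^{v_{\eps_j}}$ converge to $V$ with multiplicity one; multiplicity-one convergence then yields upper semi-continuity of the nullity of the Allen--Cahn Jacobi operator as $\eps\to 0$, so $v_{\eps_j}$ must be strictly stable for $j$ large, contradicting the standing assumption. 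You need to insert this multiplicity-one step (and the spectral upper semi-continuity it enables) to make the argument close.
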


\begin{proof}[proof of Theorem \ref{thm:mult_one_bumpy} assuming Proposition \ref{Prop:main} and Lemma \ref{lem:stable_implies_strictly_2}]
We recall that the varifold $V=\sum_j q_j |M_j|$ obtained in Section \ref{minmax_setup} satisfies $\|V\|(N)=\lim \Ece(u_{\eps})$ for $\eps={\eps}_j\to 0^+$. It follows from \cite{GMN} that if for some $j_0$ the oriented double cover of $M_{j_0}$ is stable, then $q_{j_0}=1$. Indeed, since each $M_{j_0}$ is smoothly embedded we can choose a tubular neighbourhood $T_{j_0}$ of it such that $\spt{V} \cap T_{j_0}=M_{j_0}$. The oriented double cover of $M_{j_0}$ is strictly stable by the bumpy metric assumption, so using \cite{GMN} in $T_{j_0}$ we conclude that the associated varifolds $V^{u_{\eps}}\res T_{j_0}$ converge with multiplicity $1$. Since $V^{u_{\eps}}\to V=\sum_j q_j |M_j|$ on $N$, we conclude that $q_{j_0}=1$. On the other hand, Proposition \ref{Prop:main} and Lemma \ref{lem:stable_implies_strictly_2} imply that if $j_0$ is such that $M_{j_0}$ has unstable double cover, then $q_{j_0}$ must be $1$. Indeed, if that were not the case, we could choose $M=M_{j_0}$ in Proposition \ref{Prop:main}, obtaining the existence of a path $\gamma\in \Gamma_{-1,v_{\eps}}$, for some strictly stable $v_{\eps}$, such that $\max_{t\in[a,b]}\Ece(\gamma(t))\leq 2\mathcal{H}^n(M_{j_0})-\varsigma$; a fortiori, 
$$\min_{\gamma \in  \Gamma_{-1,v_{\eps}}} \max_{t\in[a,b]}\Ece(\gamma(t))\leq 2\mathcal{H}^n(M_{j_0})-\varsigma$$
for $\eps<\min\{\epsilon_0, \varepsilon_0\}$, and 
$$\Ece(u_{\eps})=\inf\limits_{\tiny{\begin{array}{ccc}
         v_{\eps}^1, v_{\eps}^2 \in \Se\\
         v_{\eps}^1\neq v_{\eps}^2
        \end{array}}} \,\min_{\gamma \in  \Gamma_{v_{\eps}^1, v_{\eps}^2}} \max_{t\in[a,b]}\Ece(\gamma(t))\leq 2\mathcal{H}^n(M_{j_0})-\varsigma$$
for $\eps<\min\{\epsilon_0, \varepsilon_0\}$. Since $\|V\|(N)=\lim_{\eps\to 0} \Ece(u_{\eps})$ (where $\eps={\eps}_j$ is any subsequence that led to (\ref{eq:V_M_j})) and $\|V\|(N)\geq q_{j_0} \mathcal{H}^n(M_{j_0})$ we conclude that $q_{j_0}=1$.
\end{proof}

\section{Preliminary results}
\label{preliminaries}

\subsection{Truncated $1$-dimensional Allen--Cahn solutions}
\label{truncations}
We denote by $\Het(r)$ the monotonically increasing solution to $u''-W'(u)=0$ such that $\lim_{r\to \pm\infty} \Het(r) = \pm 1$, with $\Het(0)=0$. (For the standard potential $\frac{(1-x^2)^2}{4}$, we have $\Het(r)=\tanh\left(\frac{r}{\sqrt{2}}\right)$.) Then the functions $\Het(-r)$ and $\Het(\pm r+z)$ also solve $u''-W'(u)=0$ (for any $z\in \R$). The rescaled function $\He(r)=\Het\left(\frac{r}{\eps}\right)$ solves ${\eps} u''-\frac{W'(u)}{\eps}=0$.

We will make use of truncated versions of $\He$
(see \cite{ChoMan}, \cite{WaWe}, \cite{B1} for details on this truncation): for $\Lambda=3|\log\eps|$ define

$$\OHet(r) = \chi(\Lambda^{-1} r -1)\Het(r) \pm (1-\chi(\Lambda^{-1}|r|-1)),$$
where $\pm$ is chosen respectively on $r>0$, $r<0$ and $\chi$ is a smooth bump function that is $+1$ on $(-1,1)$ and has support equal to $[-2,2]$. With this definition, $\OHet=\Het$ on $(-\Lambda, \Lambda)$, $\OHet=-1$ on $(-\infty, -2\Lambda]$, $\OHet=+1$ on $[2\Lambda,\infty)$. Moreover $\OHet$ solves  $\|\OHet''-W'(\OHet)\|_{C^2(\R)} \leq C \eps^3$, for $C>0$ independent of $\eps$. (Note also that $\OHet''-W'(\OHet)=0$ away from $(-2\Lambda, -\Lambda) \cup (\Lambda,2\Lambda)$.)

For $\eps<1$, we rescale these truncated solutions and let $\OHet^{\eps}(\cdot)=\OHet\left(\frac{\cdot}{\eps}\right)$. Note that $\OHet^{\eps}$ solves $\|\eps\OHet''-\frac{W'(\OHet)}{\eps}\|_{C^2(\R)} \leq C \eps^2$ and $\eps\OHet''-\frac{W'(\OHet)}{\eps}=0$ on $(-\eps\Lambda, \eps\Lambda)$, $\OHet^{\eps}=+1$ on $(2\eps\Lambda, \infty)$, $\OHet^{\eps}=-1$ on $(-\infty,-2\eps\Lambda)$.

Using these facts and recalling that $\Ece(\Het_{\eps})=1$ we get $\Ece(\OHet^{\eps})=1+O({\eps}^2)$. (The function $O({\eps}^2)$ is bounded by $C{\eps}^2$ for all $\eps$ sufficiently small, with $C$ independent of $\eps$.)

\medskip

We define the function $\Psi:\R\to \R$
\be
\Psi(r)=\left\{\begin{array}{ccc}
\OHet^{\eps}(r+2\eps \Lambda) & r\leq 0\\
\OHet^{\eps}(-r+2\eps \Lambda) & r>0
        \end{array}\right. .
\ee
This function is smooth thanks to the fact that all derivatives of $\OHet^{\eps}$ vanish at $\pm 2\eps \Lambda$. Moreover let $\Psi_t$ denote the following family of functions, with $\Psi_0=\Psi$ and $t\in [0,\infty)$:
\be
\label{eq:family2}
\Psi_t(r):= \left\{ \begin{array}{ccc}
                      \OHet^{\eps}(r+2\eps \Lambda-t) & r\leq 0 \\
                      \OHet^{\eps}(-r+2\eps \Lambda-t) & r> 0 
                     \end{array}
\right. .
\ee
We have $\Psi_0=\Psi$. Moreover, $\Psi_{t}\equiv -1$ for $t\geq 4\eps\Lambda$. For $t\in(0,4\eps\Lambda)$ the function $\Psi_t$ is equal to $-1$ for $r$ such that $|r|\geq 4\eps\Lambda-t$.
The functions $\Psi_t$ form a family of even, Lipschitz functions. The energy $\Ece(\Psi_t)$ is decreasing in $t$: indeed, the energy contribution of the ``tails'' is zero and we have $\Ece(\Psi_t)=\Ece(\Psi)-\frac{1}{2\sigma}\int_{-t}^t \eps |\Psi'|^2 + \frac{W(\Psi)}{\eps}$.

\subsection{Large unstable region}
\label{unstable_region}

Let $N$ be a Riemannian manifold of dimension $n+1$ with $n\geq 2$. 
Let $M\subset N$ be a smooth closed minimal hypersurface such that its oriented double cover $\tilde{M}$ is unstable. We denote by $\iota: \tilde{M}\to N$ the minimal smooth immersion induced by the projection of $\tilde{M}$ onto $M$. Let $\nu$ be a choice (on $\tilde{M}$) of unit normal to the immersion $\iota$.

Let $\om>0$ be the semi-width of a well-defined tubular neighbourhood of $M$ in $N$, with $\om<\text{inj}(N)$. Define the map $\tilde{M} \times [0,\om)\to N$ given by $$(p,s) \to \text{exp}_{\iota(p)}(s \nu(p)),$$
where $\text{exp}_{\iota(p)}$ denotes the exponential map at $\iota(p)$ (from a ball in the tangent to $N$ at $\iota(p)$ to a geodesic ball in $N$ centred at $\iota(p)$). 
This is a bijective diffeomorsphism on $\tilde{M} \times (0,\omega)$ (the map is $2-1$ on $\tilde{M} \times \{0\}$). We will endow $\tilde{M} \times [0,\omega)$ with the pull-back metric from $N$. This metric is of the form $\mathscr{g}_s + ds^2$, where $\mathscr{g}_s$ is a Riemannian metric on $\tilde{M} \times \{s\}$. In the following, $\tilde{M}$ will be implicitly assumed to be a Riemannian $n$-dimensional manifold with the metric $\mathscr{g}_0$. By abuse of notation we will write $\iota + s \nu$ in place of $\text{exp}_{\iota(p)}(s \nu(p))$ and also $\iota + \varphi \nu$ in place of $\text{exp}_{\iota(p)}(\varphi(p) \nu(p))$, where $0\leq \varphi<\om$ is a smooth function on $\tilde{M}$.  

The quotient $(\tilde{M}\times [0,\om))\diagup\sim$, where $(p_1, s_1)\sim (p_2, s_2)$ if and only if $\iota(p_1)=\iota(p_2)$ and $s_1=s_2=0$, is the (open) tubular neighbhourhood of $M$ of semi-width $\om$. For notational convenience we will denote it by $\quo$. Whenever we define functions on $\tilde{M}\times [0,\om)$ they will be always even with respect to $(p,0)\in \tilde{M}\times \{0\}$ so that they can be identified with functions in $\quo$. (Lifts of functions on $M$ are exactly even functions on $\tilde{M}$. By lift of $\rho:M\to \R$ we mean the function $\tilde{\rho}$ on $\tilde{M}$ defined by $\tilde{\rho}(p) = \rho(\iota(p))$.)

We will now consider deformations of $\iota$ with initial velocity dictated by a function $\varphi \in C^2(\tilde{M})$. For $\varphi \in C^2(\tilde{M})$, consider the one-parameter family of immersions $\iota_t:\tilde{M}\to N$ defined, for $t\in(-\delta_0,\delta_0)$ (for some $\delta_0\in(0,\frac{\om}{\text{max}\varphi})$), by $(p,t)\to \text{exp}_{\iota(p)}(t \varphi(p) \nu(p))$, for $p\in\tilde{M}$. The first variation of area at $t=0$ is $0$ because $M$ is minimal. The second variation of area at $t=0$ is given by 
\be
\label{eq:second_var}
\int_{\tilde{M}}|\nabla \varphi|^2  d{\Hc}^n-\int_{\tilde{M}}\varphi^2 (|A|^2 +\Rc{N}(\nu,\nu)) d{\Hc}^n,
\ee
where $A$ denotes the second fundamental form of $\iota$, $\nabla$ the gradient on $\tilde{M}$ (with respect to $\mathscr{g}_0$), $\Rc{N}$ the Ricci tensor of $N$ and $d\mathcal{H}^n$ is $d\text{vol}_{\mathscr{g}_0}$.

\begin{lem}[unstable region]
\label{lem:unstableregion}
There exist a geodesic ball $D\subset M$ with radius $R_0$, with $0<R_0<\text{inj}(N)$, and a function $\tilde{\phi} \in C^2_c(\tilde{M})$ with $\text{supp}\tilde{\phi}\subset \tilde{M}\setminus \overline{\iota^{-1}(D)}$ and $\tilde{\phi}\geq 0$, such that 
\begin{equation}
 \label{eq:unstableregion}
 \int_{\tilde{M}}|\nabla \tilde{\phi}|^2  d{\Hc}^n-\int_{\tilde{M}}\tilde{\phi}^2 (|A|^2 +\Rc{N}(\nu,\nu)) d{\Hc}^n < 0.
\end{equation}
\end{lem}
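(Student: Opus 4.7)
The plan is to upgrade an arbitrary destabilizing test function on $\tilde{M}$ to one that is (i) non-negative and (ii) compactly supported away from $\iota^{-1}(D)$ for a small enough geodesic ball $D\subset M$, at the cost of an arbitrarily small error in the quadratic form $Q(\varphi,\varphi):=\int_{\tilde{M}}|\nabla\varphi|^2 \,d\mathcal{H}^n-\int_{\tilde{M}}\varphi^2(|A|^2+\Rc{N}(\nu,\nu))\,d\mathcal{H}^n$ appearing in (\ref{eq:second_var}). First, I would fix a positive starting point: instability of $\tilde{M}$ means that the Jacobi operator $L=-\Delta_{\mathscr{g}_0}-(|A|^2+\Rc{N}(\nu,\nu))$ on $(\tilde{M},\mathscr{g}_0)$ has negative lowest eigenvalue. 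On each connected component of $\tilde{M}$ (there are either one or two, depending on whether $M$ is one- or two-sided in $N$) the principal eigenfunction of $L$ may be chosen strictly positive, and taking the principal eigenfunction of the unstable component (and zero on the other component, if present) yields $\psi\in C^\infty(\tilde{M})$, $\psi\geq 0$, $\psi\not\equiv 0$, with $Q(\psi,\psi)<0$.

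Next, I would fix any $p_0\in M$ and set $D_R:=B^M_R(p_0)$, the geodesic ball of radius $R$ in $M$. Since $\iota$ is a covering map, for all $R$ sufficiently small $\iota^{-1}(D_R)$ is the disjoint union of two geodesic balls in $(\tilde{M},\mathscr{g}_0)$. I would then construct a radial cutoff $\eta_R\in C^2(\tilde{M})$, with values in $[0,1]$, vanishing on $\iota^{-1}(D_R)$, identically $1$ on $\tilde{M}\setminus\iota^{-1}(D_{R_\ast})$ for some fixed small $R_\ast$ independent of $R$, and with
$$\int_{\tilde{M}}|\nabla\eta_R|^2\,d\mathcal{H}^n\;\longrightarrow\;0\quad\text{as } R\to 0^+.$$
For $n\geq 3$ this is a standard capacity estimate, obtained by linearly interpolating in the radial coordinate and smoothing, which gives $\int|\nabla\eta_R|^2\lesssim R^{n-2}$. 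The case $n=2$ is delicate and forces the use of a logarithmic cutoff of the form $\eta_R(r)=\log(r/R)/\log(R_\ast/R)$ on the annular region (with $r$ denoting the $\mathscr{g}_0$-distance to the chosen sheet of $\iota^{-1}(p_0)$), which gives $\int|\nabla\eta_R|^2\lesssim 1/|\log R|\to 0$; a standard mollification then produces a $C^2$ version with the same smallness.

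Finally I would set $\tilde{\phi}:=\eta_R\,\psi$. By construction $\tilde{\phi}\in C^2_c(\tilde{M})$, $\tilde{\phi}\geq 0$, and $\mathrm{supp}\,\tilde{\phi}\subset\tilde{M}\setminus\overline{\iota^{-1}(D_R)}$. Expanding,
$$Q(\tilde{\phi},\tilde{\phi})=\int_{\tilde{M}}\eta_R^2|\nabla\psi|^2+2\int_{\tilde{M}}\eta_R\psi\langle\nabla\eta_R,\nabla\psi\rangle+\int_{\tilde{M}}\psi^2|\nabla\eta_R|^2-\int_{\tilde{M}}\eta_R^2\psi^2(|A|^2+\Rc{N}(\nu,\nu)),$$
with all integrals taken with respect to $d\mathcal{H}^n$. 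Since $\eta_R\to 1$ a.e.\ and $\|\nabla\eta_R\|_{L^2}\to 0$, dominated convergence handles the first and fourth terms (giving $\int|\nabla\psi|^2$ and $\int\psi^2(|A|^2+\Rc{N}(\nu,\nu))$ in the limit), while Cauchy--Schwarz together with the boundedness of $\psi$ and $\nabla\psi$ on $\tilde{M}$ (which is compact and on which $\psi$ is smooth) forces the two cross terms to vanish. Hence $Q(\tilde{\phi},\tilde{\phi})\to Q(\psi,\psi)<0$ as $R\to 0^+$, and taking $R=R_0$ small enough gives (\ref{eq:unstableregion}) with $D=D_{R_0}$. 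The only genuine obstacle in this argument is the small-capacity estimate in the critical dimension $n=2$; every other step (passing from $\varphi$ to the positive eigenfunction $\psi$, smoothing the Lipschitz cutoff, handling a possibly disconnected $\tilde{M}$) is routine.
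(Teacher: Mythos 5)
Your proposal is correct and follows essentially the same route as the paper: take the (non-negative) principal eigenfunction of the Jacobi operator on $\tilde{M}$ and multiply it by a cutoff of arbitrarily small Dirichlet energy that vanishes near a point, using that points have zero $W^{1,2}$-capacity. The only difference is that the paper builds the cutoff in the ambient manifold $N$ (of dimension $n+1\geq 3$) and restricts it via $\iota$, whereas you build it intrinsically on $\tilde{M}$ (hence the logarithmic cutoff when $n=2$); both are standard and the expansion of the quadratic form you carry out is exactly the implicit content of the paper's "for $\delta$ sufficiently small it is possible to replace $\eta$ by $\eta(1-\rho\circ\iota)$ and still obtain a negative result."
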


\begin{proof}
Let $\eta$ be the first eigenfunction of the Jacobi operator (on $\tilde{M}$), i.e.~$\Delta + (|A|^2+\Rc{N}(\nu.\nu))$, where $\Delta$ is the Laplace--Beltrami operator ($\eta$ can be normalized e.g.~to have unit $L^2$ norm). Then by standard theory $\eta$ is smooth and strictly positive on $\tilde{M}$ and $\int_{\tilde{M}}|\nabla \eta|^2  d{\Hc}^n-\int_{\tilde{M}}\eta^2 (|A|^2 +\Rc{N}(\nu,\nu)) d{\Hc}^n<0$. Pick an arbitrary $b\in M$ and let $\{b_1, b_2\}=\iota^{-1}(b)$. By standard capacity properties, given $\delta>0$ arbitrary there exists $\rho\in C^\infty_c(N)$, $0\leq \rho\leq 1$, that vanishes in a neighbourhood of $b$ and is identically one away from a (slightly larger) neighbourhood of $b$ and such that $\int_N |\nabla \rho|^2<\delta$. These properties of $\rho$ imply that for $\delta$ sufficiently small it is possible to replace $\eta$ by $\eta (1-\rho\circ \iota)$ (which is still non-negative) in the previous inequality and still obtain a negative result. Then we set $\tilde{\phi}=\eta (1-\rho\circ \iota)$. By construction there exists a geodesic ball $D\subset M$ (centred at $b$) contained in $\{1-\rho=0\}$.
\end{proof}

\begin{oss}[choice of $B$]
\label{oss:choiceB}
For $D$ obtained in Lemma \ref{lem:unstableregion}, we choose a (geodesic) ball $B$ in $M$ centred at $b$ and of radius $R<\frac{R_0}{2}$, where $R_0$ is the radius of $D$. The choices of $B$ and $\tilde{\phi}$ will be kept until the end. We will write $\tilde{D}=\iota^{-1}(D)$ and $\tilde{B}=\iota^{-1}(B)$.
\end{oss}

\begin{oss}
\label{oss:geom_counterpart}
The geometric counterpart of Lemma \ref{lem:unstableregion} is that the minimal immersion $\iota$ is unstable with respect to the area functional also if we restric to deformations that leave $\tilde{B}$ (and $B$) fixed, see Remark \ref{oss:choiceB2}.
\end{oss}

\subsection{Relevant immersions}
\label{immersions}

We have a natural unsigned Riemannian distance on $\tilde{M}\times [0,\om)$. In particular, the unsigned distance to $M$ of a point $(p,s) \in \tilde{M}\times [0,\om)$ is given by $s$. The unsigned distance to $M$ descends to the usual Riemannian distance to $M$ in $\quo$ and is a smooth function in $\quo$.

Let $\Pi$ denote the nearest point projection onto $M$, well-defined in $\quo$, $\Pi(p,s)=(p,0)$. For future purposes, we choose $\om$ suitably small so that if $x$ is in $\quo$, then $\left|\,\,|J\Pi|(x)-1\,\,\right|\leq 2 K'_A d(x,M)$ and $\left|\,\,\frac{1}{|J\Pi|(x)}-1\,\,\right|\leq 2K'_A d(x,M)$, where $d$ is the Riemannian distance and $K'_A$ is the maximum of the norm of the second fundamental form of $M$.

Given an immersion $\iota + \varphi \nu$, where $\varphi > 0$ is a (positive) smooth function on $\tilde{M}$, with $\varphi<{\om}$, the image of this immersion  is a two-sided embedded hypersurface with boundary, that we will denote by $M_{\varphi}$. We will always assume that the choice of normal to $M_{\varphi}$ is the one for which the scalar product with $\frac{\p}{\p s}$ is positive. 

\textit{Signed distance to $M_{\varphi}$.} We define, on $\left({\quo}\right) \setminus M$,
the following ``signed distance to $M_{\varphi}$''. Using the identification with $\tilde{M} \times (0,\omega)$, so that $M_{\varphi}$ is identified with the graph of $\varphi$ over $\tilde{M}$, we say that $(p,s)$ has negative distance to $M_{\varphi}$ if $s<\varphi(p)$ and positive distance to $M_{\varphi}$ if $s>\varphi(p)$. The modulus of the signed distance is the unsigned distance to $\text{graph}(\varphi)$ in $\tilde{M} \times (0,\omega)$ (recall that the latter is endowed with the Riemannian metric pulled back from $N$). If $(p,s) \in \text{graph}(\varphi)$ then the distance extends smoothly at $(p,s)$ with value $0$. (We do not define the unsigned distance on $M$.)

\begin{oss}[relevant immersions]
 \label{oss:def_immersions}
Recall the function $\tilde{\phi}$ given in Lemma \ref{lem:unstableregion}.
For sufficiently small constants $\tilde{c}_0>0$ and $\tilde{t}_0>0$ the following immersions are well-defined for all $c\in [0,\tilde{c}_0], t\in [0,\tilde{t}_0]$ (the constants $\tilde{c}_0, \tilde{t}_0$ need to be sufficiently small to ensure that we stay in $\tilde{M} \times [0,\omega)$):

$$\begin{array}{ccc}
   \tilde{M}  & \to & {\quo}\\
   p &\to & (p,c+t\tilde{\phi}(p))
  \end{array}
 \text{ or, equivalently, } \iota+(c+t\tilde{\phi})\nu:\tilde{M} \to N.$$
The image of this immersion is identified with
$$\text{graph}\left((c+t\tilde{\phi})\right)\subset \tilde{M} \times [0,\omega].$$
(The immersion is not necessarily even in $p$.)

Note that a suitably small choice of $\tilde{c}_0>0$ and $\tilde{t}_0>0$ additionally guarantees the following technically useful fact: let $0<c_1<\tilde{c}_0$, for every $c\in[c_1,\tilde{c}_0]$ and $t\in [0,\tilde{t}_0]$ there exists a tubular neighbourhood of size $c_1$ of the embedded separating hypersurface $\text{graph}\left((c+t\tilde{\phi}) \right)$. (We will use this with $c_1=6\eps|\log\eps|$ in Section \ref{proof_main}, \ref{peak} in particular.)

Let $B$ be the ball in Remark \ref{oss:choiceB}. We let $\tilde{M}_{B}=\tilde{M} \setminus \iota^{-1}(B)$ and $\iota_B=\left.\iota\right|_{\tilde{M}_{B}}$ (note that $\tilde{M}_{B}$ is a manifold with boundary). By abuse of notation we will write $\tilde{\phi}$ also to mean $\left.\tilde{\phi}\right|_{\tilde{M}_{B}}$. The immersions 
$$\begin{array}{ccc}
   \tilde{M}_{B} & \to & {\quo}\\
   p &\to & (p,c+t\tilde{\phi}(p))
  \end{array}\text{ or, equivalently, } \iota_B+(c+t\tilde{\phi})\nu:\tilde{M}_{B}\to N,$$ 
are well-defined for all $c\in [0,\tilde{c}_0], t\in [0,\tilde{t}_0]$. The image of this immersion with boundary is identified with 
$$\text{graph}\left(\left.(c+t\tilde{\phi})\right|_{\tilde{M}_{B}}\right).$$
\end{oss}

\begin{oss}
\label{oss:choiceB2}
By Lemma \ref{lem:unstableregion} and Remark \ref{oss:geom_counterpart} there exists $t_0\in [0,\tilde{t}_0]$ such that the area of the immersion $\iota+t \tilde{\phi}\nu: \tilde{M} \to N$ for $t\in[0,t_0]$ is strictly decreasing in $t$
(the second derivative of area at $t=0$ along the deformation $\iota+t \tilde{\phi}\nu$ is strictly negative). Note, moreover, that this deformation leaves $B$ fixed, so we equivalently have the following: the area of the immersion with boundary $\iota_B+ t\tilde{\phi}\nu: \tilde{M}_{B} \to N$ is strictly decreasing in $t$ for $t\in[0,t_0]$ (this is a deformation of the immersion with fixed boundary; the area at $t=0$ is $2{\Hc}^n(M)-2{\Hc}^n(B)$).
\end{oss}

\begin{lem}
\label{lem:unstable_addcnst_region}
Let $t_0$ be as in Remark \ref{oss:choiceB2}. There exist $c_0\in [0,\tilde{c}_0]$ and $\tau>0$ such that 

\begin{description}
 \item[(i)] for all $c\in[0,c_0]$ and for all $t\in [0,t_0]$ the area of the immersion (with boundary) $\iota_B+(c+t\tilde{\phi})\nu:\tilde{M}_{B} \to N$ is $\leq 2({\Hc}^n(M)-\frac{3}{4}{\Hc}^n(B))$;
 \item[(ii)] for all $c\in[0,c_0]$ the area of the immersion $\iota+(c+t_0\tilde{\phi})\nu: \tilde{M}\to N$ is $\leq 2{\Hc}^n(M)-\tau$.
\end{description}
\end{lem}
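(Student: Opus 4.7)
The plan is to use Remark~\ref{oss:choiceB2} at $c=0$ to get a strict inequality with some positive slack, and then extend to small $c>0$ by uniform continuity of the area functional in the parameters.

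First, observe that the area of the immersion (with boundary) $\iota_B+(c+t\tilde\phi)\nu:\tilde M_B\to N$ is a smooth function of $(c,t)\in[0,\tilde c_0]\times[0,\tilde t_0]$ (the immersion depends smoothly on $(c,t)$ in $C^2$, and area is a smooth functional of the immersion). Denote it by
$$A(c,t)=\text{area}\bigl(\iota_B+(c+t\tilde\phi)\nu\bigr).$$
At $(c,t)=(0,0)$ we have $A(0,0)=\mathcal{H}^n(\tilde M_B)=2\mathcal{H}^n(M)-2\mathcal{H}^n(B)$. By Remark~\ref{oss:choiceB2}, $t\mapsto A(0,t)$ is strictly decreasing on $[0,t_0]$, so
$$A(0,t)\leq 2\mathcal{H}^n(M)-2\mathcal{H}^n(B)\qquad\text{for all }t\in[0,t_0].$$
This beats the target $2\mathcal{H}^n(M)-\tfrac{3}{2}\mathcal{H}^n(B)$ by $\tfrac{1}{2}\mathcal{H}^n(B)>0$. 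Since $A$ is continuous on the compact set $[0,\tilde c_0]\times[0,t_0]$, it is uniformly continuous there; hence there exists $c_0^{(i)}\in(0,\tilde c_0]$ such that $|A(c,t)-A(0,t)|\leq\tfrac{1}{2}\mathcal{H}^n(B)$ for all $c\in[0,c_0^{(i)}]$ and $t\in[0,t_0]$. This proves (i) with $c_0^{(i)}$ in place of $c_0$.

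For (ii), consider the full immersion $\iota+(c+t_0\tilde\phi)\nu:\tilde M\to N$. Since $\tilde\phi$ vanishes on $\tilde B$, this immersion splits as (the restriction to) $\tilde B$ giving the parallel immersion $\iota+c\nu$ and (the restriction to) $\tilde M_B$ giving $\iota_B+(c+t_0\tilde\phi)\nu$; therefore
$$A_{full}(c):=\text{area}\bigl(\iota+(c+t_0\tilde\phi)\nu\bigr)=\text{area}\bigl(\iota+c\nu\text{ on }\tilde B\bigr)+A(c,t_0),$$
which is again a smooth function of $c\in[0,\tilde c_0]$. At $c=0$ we have $A_{full}(0)=2\mathcal{H}^n(M)-2\mathcal{H}^n(B)+A(0,t_0)\cdot 0\ldots$ — more directly, $A_{full}(0)$ is the area of $\iota+t_0\tilde\phi\nu$, which by Remark~\ref{oss:choiceB2} satisfies $A_{full}(0)<2\mathcal{H}^n(M)$. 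Set $\tau_0:=2\mathcal{H}^n(M)-A_{full}(0)>0$. By continuity of $A_{full}$ at $c=0$, there exists $c_0^{(ii)}\in(0,\tilde c_0]$ such that $A_{full}(c)\leq A_{full}(0)+\tau_0/2=2\mathcal{H}^n(M)-\tau_0/2$ for $c\in[0,c_0^{(ii)}]$. Setting $\tau:=\tau_0/2$ and $c_0:=\min\{c_0^{(i)},c_0^{(ii)}\}$ gives both conclusions.

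There is no genuine obstacle; the only point requiring a little care is (a) the uniformity in $t\in[0,t_0]$ needed for (i), which is obtained from uniform continuity of $A(c,t)$ on the compact rectangle, and (b) keeping the two statements consistent by choosing the same $c_0$ at the end.
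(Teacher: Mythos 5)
Your proof is correct and follows essentially the same route as the paper: both arguments rest on Remark \ref{oss:choiceB2} (strict decrease of area along the $t\tilde{\phi}$ deformation, giving a definite gap of $\tfrac{1}{2}\mathcal{H}^n(B)$ for (i) and a positive gap $2\tau$ at $t_0$ for (ii)) combined with continuity of the area in $(c,t)$ and compactness of $[0,t_0]$; the paper phrases (i) as a contradiction with a convergent subsequence $c_i\to 0$, $t_i\to t$, which is the same content as your uniform-continuity argument. The only cosmetic blemish is the false start in your computation of $A_{full}(0)$, which you immediately correct.
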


\begin{proof}
Let us prove that (i) holds for some $c'_0\in [0,\tilde{c}_0]$ (in place of $c_0$). Argue by contradiction: if not, then there exists $c_i\to 0$ and $t_i \in [0,t_0]$ such that the area of $\iota_B+(c_i+t_i\tilde{\phi})\nu$ is $\geq 2({\Hc}^n(M)-\frac{3}{4}{\Hc}^n(B))$ for all $i$. Upon extracting a subsequence we may assume $t_i \to t \in [0,t_0]$ and by continuity of the area we get that the area of $\iota_B+t\tilde{\phi}\nu$ is $\geq 2({\Hc}^n(M)-\frac{3}{4}{\Hc}^n(B))$. This is however in contradiction with Remark \ref{oss:choiceB2}, which says that this area is $\leq 2{\Hc}^n(M)-2{\Hc}^n(B)$.

Let us prove that (ii) holds for some $c''_0\in [0,\tilde{c}_0]$ (in place of $c_0$) and for some $\tau>0$. By Remark \ref{oss:choiceB2} the area of $\iota+t_0 \tilde{\phi}\nu$ is strictly smaller than $2{\Hc}^n(M)$. Denote by $2\tau$ the difference of the two areas. By continuity, there exists $c''_0>0$ such that for all $c\in [0,c''_0]$ the area of the immersion $\iota+ (c+t_0\tilde{\phi})\nu$ is smaller than $2{\Hc}^n(M)-\tau$.

Choosing $c_0=\min\{c'_0, c''_0\}$ concludes. 
\end{proof}

Since $\iota:\tilde{M}\to N$ is a closed smooth immersion we can find a constant $K_A>0$ such that: 

(i) the modulus of the second fundamental form along the immersion $\iota+ (c+t\tilde{\phi})\nu$ is $\leq K_A$ for all $c\in [0,c_0]$, $t\in [0,t_0]$;

(ii) let $M_{c,t}$ be the embedded hypersurface obtained as the image of $\iota+ (c+t\tilde{\phi})\nu$; the nearest point projection $\Pi_{c,t}$ from a tubular neighbourhood of $M_{c,t}$ with semi-width $c$, has the following bounds: $\left|\,\,|J\Pi_{c,t}|(x)-1\,\,\right|\leq K_A d(x,M_{c,t})$ and $\left|\,\,\frac{1}{|J\Pi_{c,t}|(x)}-1\,\,\right|\leq K_A d(x,M_{c,t})$, where $d$ is the Riemannian distance. To simplify notation, we also assume that $K_A$ is chosen to be larger than $2K'_A$, the constant that apeared in the estimates on $J \Pi$ at the beginning of this section.

\medskip

The immersions in Lemma \ref{lem:unstable_addcnst_region} will represent intermediate points of the path joining $-1$ to a stable critical point $v_{\eps}$ that we will produce in Sections \ref{2M-2B}, \ref{peak}, \ref{reach_1}, \ref{final_argument}. The next one, instead, will be used as a barrier for the gradient flow argument in Section \ref{reach_1}.

Let $\eta$ be the first eigenfunction (e.g.~normalized to have unit $L^2$ norm) of the Jacobi operator $\mathcal{L}$ (on $\tilde{M}$, which we assumed is unstable) and let $\lambda$ be the associated eigenvalue:
$$\mathcal{L}(\eta):=\Delta \eta + (|A|^2+\Rc{N}(\nu,\nu))\eta = \lambda \eta, \,\,\,\, \lambda>0,$$
where $\Delta$ is the Laplace--Beltrami operator on $\tilde{M}$. By standard theory $\eta$ is smooth and never vanishing, so we will assume that it is strictly positive on $\tilde{M}$. The positiveness of $\lambda$ follows from the fact that $\iota:\tilde{M}\to N$ is an unstable minimal immersion.

Consider the one-parameter deformation $\iota_{t}:=\iota+t\varphi \nu:\tilde{M} \to N$ for $t\in(-\delta_0, \delta_0)$ for some positive $\delta_0 < \frac{\om}{\max \varphi}$ and for some $\varphi\in C^2(\tilde{M})$. Let $H_t(p)$ denote the scalar mean curvature of $\iota_{t}$ at $p\in \tilde{M}$ with respect to the choice of unit normal $\nu_t$ (along $\iota_t$) that has positive scalar product with $\nu$. Recall that the first variation of area at time $t$ is given by $-\int_{\tilde{M}}H_t \varphi d\text{vol}_{g_t}$ (where $g_t$ is the metric induced by the pull-back via $\iota_t$) and the second variation of area at $t=0$ is given by $-\int_{\tilde{M}} \mathcal{L}(\varphi)\varphi d\text{vol}_{g_0}$. Then, using $H_0\equiv 0$, we get $\left.\frac{d}{d t}\right|_{t=0} H_t(p) = \mathcal{L}(\varphi).$
Choosing $\eta$ in place of $\varphi$ we therefore get 
$$\left.\frac{d}{d t}\right|_{t=0} H_t(p) = \mathcal{L}(\eta)=\lambda\eta(p)>0.$$
(In other words, the mean curvature of $\iota_t$ will point away from $M$ when we perturb by the first eigenfunction.) From this, we see that there exists a sufficiently small $z_1>0$ such that the scalar mean curvature $H_{t,\eta}$ of $\iota+t\eta \nu$ satisfies (recall that $\min_{\tilde{M}}\eta>0$) 
\be
\label{eq:H_eta_lower_bound}
H_{t,\eta}\geq \frac{t}{2}\, \lambda \,\min_{\tilde{M}}\eta \text{ for all }t\in[0, z_1].
\ee
We choose $z_0 \in [0,z_1]$ such that $z_0 \, \max_{\tilde{M}} \eta <c_0$, where $c_0$ was chosen in Lemma \ref{lem:unstable_addcnst_region}. The embedded hypersurface in $N$ given by $\text{graph}(z_0\eta)\subset \quo$, in other words the image of the immersion $\iota+z_0 \eta \nu$, will be used as a barrier in Section \ref{reach_1}, thanks to the mean convexity property (\ref{eq:H_eta_lower_bound}).

\section{Proof of Proposition \ref{Prop:main}}
\label{proof_main}

The upshot of the forthcoming sections is the following: given $M\subset N$ as in Proposition \ref{Prop:main}, prove that there exists $\eps_0>0$ such that for any $\eps<\eps_0$ we can find continuous path (in $W^{1,2}(N)$) that joins the constant $-1$ to a strictly stable critical point of $\Ece$ and such that the maximum of $\Ece$ along this path is at most $2\mathcal{H}^n(M)-\delta$, for some $\delta>0$ that only depends on $M$ and $N$ (hence independent of $\eps$).

\subsection{Choice of $\eps$}
\label{parameters}
Let $B$ be as in Remark \ref{oss:choiceB}, $c_0$, $t_0$, $\tau$ be as in Lemma \ref{lem:unstable_addcnst_region} and $K_A$ as in the remarks that follow Lemma \ref{lem:unstable_addcnst_region}. The geometric quantities $\mathcal{H}^n(B)$ and $\tau$ are relevant in the forthcoming construction.

In the following sections we are going to exhibit, for every sufficiently small $\eps$, an admissible continuous path in $W^{1,2}(N)$ with $\Ece$ suitably bounded along the whole path. We will specify now an initial choice $\eps<\eps_1$, which permits the construction of the $W^{1,2}$-functions that describe the path. When we will estimate $\Ece$ along the path, we will do so in terms of geometric quantities (hence independent of $\eps$) plus errors that will depend on $\eps$. For sufficiently small $\eps$, i.e.~$\eps<\eps_2$ for a choice of $\eps_2\leq \eps_1$ to be specified, these errors will be $\leq C (\eps|\log\eps|)$, where $C>0$ is independent of $\eps$; we will not keep track of the constants and will instead write $O(\eps|\log\eps|)$. At the very end (see Section \ref{final_argument}), in order to make these errors much smaller than the geometric quantities, and thus have an effective estimate for $\Ece$, we will revisit the smallness choice of $\eps$: for some $\eps_3\leq \eps_2$ we will get that whenever $\eps<\eps_3$  the errors can be absorbed in the geometric quantities. Therefore for $\eps<\eps_3$, we will obtain an upper bound for $\Ece$ along the path that is independent of $\eps$.

Now we choose $\eps_1$; the choices of $\eps_2$, $\eps_3$ will be made as we proceed into the forthcoming sections. We restrict to values sufficiently small, namely $\eps_1<1/e$, so to have that $\eps|\log\eps|$ is decreasing as $\eps$ decreases; this guarantees that the conditions specified below on $\eps_1$ hold also for each $\eps<\eps_1$. The condition $\eps_1<1/e$, also gives that the $O(\eps^2)$-control that we have in Section \ref{truncations} on the approximated one-dimensional solutions are valid for all $\eps<\eps_1$. Moreover, (recall that $\eta$, $c_0$ and $z_0$ are chosen at the end of Section \ref{immersions}) we require:

\noindent \textit{(i)} $12\eps_1|\log\eps_1|< \frac{c_0}{20}$ (and implicitly $<\frac{1}{2}\om$);

\noindent \textit{(ii)} $12\eps_1|\log\eps_1|< z_0 \min_{\tilde{M}} \eta$;

\noindent \textit{(iii)} $24\eps_1|\log\eps_1|< c_0-z_0 \max_{\tilde{M}} \eta$.

\noindent Moreover, we will need to ensure a positiveness condition on the mean curvature of the level sets of the distance function to the image of $\iota+ z_0 \eta \nu$ (equivalently, to $\text{graph}(z_0\eta)$, passing to the quotient $\quo$). The signed distance $\text{dist}(\cdot, \text{graph}(z_0\eta))$ is well-defined on $\tilde{M} \times (0,\om)$ and negative at $(p,s)$ with $s<z_0\eta(p)$ and positive at $(p,s)$ with $s>z_0\eta(p)$; its modulus is the Riemannian distance to $\text{graph}(z_0\eta)$.
To begin with, consider a tubular neighbourhood of $\text{graph}(z_0\eta)$ that does not intersect $M$, and denote by $\om_1$ its semi-width. Choose $\eps_1$ small enough to have $12\eps_1|\log\eps_1|<\om_1$. Now, for $d\in [-12\eps_1|\log\eps_1|, 12\eps_1|\log\eps_1|]$, consider the smooth embedded hypersurface given by the level set $\{\text{dist}(\cdot, \text{graph}(z_0\eta))=d\}$.
Let $H_{d,z_0\eta}$ denote the scalar mean curvature of $\{\text{dist}(\cdot, \text{graph}(z_0\eta))=d\}$, with respect to the normal on $\iota+z_0\eta \nu$ that has positive scalar product with $\nu$. Recall that for $d=0$ (i.e.~on $\text{graph}(z_0\eta)$) we have that the scalar mean curvature is $\geq \frac{z_0}{2}\lambda \min_{\tilde{M}}\eta$. By continuity we therefore ensure that, for $\eps_1$ sufficiently small, we have, for all $d\in [-12\eps_1|\log\eps_1|, 12\eps_1|\log\eps_1|]$:

\noindent \textit{(iv)} $H_{d,z_0\eta}\geq  \frac{z_0}{4}\,\lambda\, \min_{\tilde{M}}\eta$. (Implicitly, we also assumed $12\eps_1|\log\eps_1|<\om_1$.)

\subsection{From $\Ece(-1)=0$ to $2 (|M|-|B|)$}
\label{2M-2B}
We will work at fixed $\eps$, with $\eps<\eps_1$. We will often use the shorthand notation $\Lambda=3|\log\eps|$.
Recall that the (geodesic) ball $B$ has radius $R$ (Remark \ref{oss:choiceB}) and $D$ is the concentric ball with radius $2R$. Let $\tilde{D}=\iota^{-1}(D)$, $\tilde{B}=\iota^{-1}(B)$.

\noindent \textit{Definition of $\chi$}. Let $\chi_0:M\to [0,1]$ be a smooth function on $M$ with compact supports contained in $D$ and such that $\chi_0=1$ on $B$ and $|\nabla \chi_0|\leq 2/R$. Let $\chi:\tilde{M}\to [0,1]$ be defined by $\chi=\iota\circ \chi_0$. Then $\chi$ is smooth and compactly supported in $\tilde{D}$, with $\chi=1$ on $\tilde{B}$ and $|\nabla \chi|\leq 2/R$.

\medskip

\noindent \textit{Definition of $f$}. 
We define $f(y,z)$ on $\tilde{M} \times [0,\om)$ as follows ($\Psi_t$ was defined in (\ref{eq:family2})):

$$f(y, z) = \Psi_{4\eps\Lambda\chi(y)}(z).$$
This function is even in $y$, therefore it descends to a function on $\quo$ and we will now check that it is there Lipschitz. Since $f$ is smooth on $\tilde{M} \times (0,\om)$ and equal to $-1$ on $\tilde{M} \times [\om/2,\om)$, we only need to check at $x\in M$. Let $B_\rho(x)\subset M$ be a gedesic ball, then we have a well-defined tubular neighbourhood of $B_\rho(x)$ of semi-width $\om$ that is diffeomorphic to $B_\rho(x) \times (-\om,\om)$ and isometric when we endow $B_\rho(x) \times (-\om,\om)$ with the Riemannian metric from $N$. The Jacobian factor measuring the distortion of this metric from the product metric is bounded by a constant that only depends on the geometric data $M, N$, therefore it suffices to prove the Lipschitz property with respect to the product metric. Using Fermi coordinates $(a,s)$ in $B_\rho(x) \times (-\om,\om)$ the expression of $f$ becomes $\Psi_{4\eps\Lambda\chi_0(a)}(s)$ because $\Psi_t$ is even for all $t$. This expression shows that $f$ is Lipschitz continuous in $B_\rho(x) \times (-\om,\om)$, with a fixed Lipschitz constant (depending on $\Psi$ and $\chi$ and on the geometric data $M, N$).

Passing $f$ to the quotient $\quo$, we can extend it to $N$ by setting it equal to $-1$ on $N\setminus \left(\quo\right)$, (since $f=-1$ on $\tilde{M} \times [\om/2,\om)$). We will denote the function defined on $N$ also by $f$, by abuse of notation. This function is Lipschitz on $N$ (and actually smooth away from $(D\setminus B)$).

\medskip

\noindent \textit{Allen--Cahn energy of $f$}. 
We will show that if we choose $\eps$ sufficiently small, then $\Ece(f)$ is controlled by twice the area of $M\setminus B$, up to errors of type $O(\eps|\log\eps|)$. Denote by $|J\Pi|$ the Jacobian determinant of the nearest point projection $\Pi$ and recall (Section \ref{immersions}) that $\left|\,\,\frac{1}{|J\Pi|(x)}-1\,\,\right|\leq   K_A\, d(x,M)$ whenever $d(x,M)\leq 4\eps\Lambda$  and $\eps<\eps_1$. 

The Allen--Cahn energy of the Lipschitz function $f$ on $N$ is $0$ outside $\quo$, since $f=-1$ there. We perform the computation in $\tilde{M} \times (0,\om)$ (removing $M$ from the domain does not affect the computation). Denote by $\nabla_y f$ the gradient of $f$ with respect to the variables $y\in \tilde{M}$. Then by definition of $f$ we have, in $\tilde{M}\times (0,\om)$:
$$\nabla_y f = \left. \frac{d}{ds}(\Psi_s)(z)\right|_{s=4\eps\Lambda\chi(y)} 4\eps\Lambda\nabla_y \chi$$ 
and since $|\nabla \chi|\leq \frac{2}{R}$, $|\frac{d}{ds}(\Psi_s)(z)| = |\Psi'(|z|+s)| \leq \frac{3}{\eps}$, this implies ($\Lambda=3|\log\eps|$)

\begin{equation}
 \label{eq:tangential_Dirichel_estimate}
\eps|\nabla_y (f)|^2 \leq \eps \frac{C}{\eps^2} \frac{\eps^2 |\log\eps|^2}{R^2}= \frac{C \eps |\log\eps|^2}{R^2}. 
\end{equation}
(Here $C=(48\cdot 3)^2\cdot C'$, where $C'$ depends on $(\mathscr{g}_0)^{-1}\mathscr{g}_s$.) By the coarea formula (the metric is the one induced by the pull-back from $N$) the Allen--Cahn energy of $f$ is then given by
\be
\label{eq:comp_energy}
\int_{\tilde{B}} \left( \int_{(0, {\om})}\frac{1}{|J\Pi|}\left( \eps \left|\frac{\p}{\p z} f\right|^2 + \frac{W(f)}{\eps} \right)  dz\right) dy + 
\ee
$$+\int_{\tilde{D}\setminus \tilde{B}} \left( \int_{(0, {\om})}\frac{1}{|J\Pi|}\left( \eps \left|\frac{\p}{\p z} f\right|^2 + \frac{W(f)}{\eps} \right)  dz\right) dy +$$ $$+ \int_{(\tilde{D}\setminus \tilde{B})\times (0,{\om})}   \eps |\nabla_y f|^2 $$
$$+ \int_{\tilde{M}\setminus \tilde{D}}\left(\int_{(0, {\om})}  \eps \left|\frac{\p}{\p z} f\right|^2 + \frac{W(f)}{\eps} dz\right)dy .$$
The first term vanishes because $f=-1$ on the domain of integration. Thanks to (\ref{eq:tangential_Dirichel_estimate}) the third term can be made arbitrarily small by choosing $\eps$ sufficiently small; this term is $O(\eps^2|\log\eps|^3)$, since the inner integrand in non-zero only on $[0,4\eps\Lambda]$. For the second term, note that the inner integral only gives a contribution in $[0, 4\eps\Lambda]$ (as $f=-1$ on $[4\eps\Lambda,\om]$). Recalling the bounds on the Jacobian factor $|J\Pi|$ and the energy estimates on the one-dimensional profiles, see Section \ref{truncations}, we find

$$\text{second term of (\ref{eq:comp_energy})} \leq {\Hc}^n(\tilde{D}\setminus \tilde{B})\,(1+4\eps\Lambda K_A)\frac{\Ece( \Psi_{4\eps\Lambda\chi(y)})}{2}\leq $$ $$\leq {\mathcal{H}}^n(\tilde{D}\setminus \tilde{B})\,(1+4\eps\Lambda K_A)(1+O({\eps}^2)).$$ 
Arguing similarly for the fourth term, we get the upper bound
$$\text{fourth term of (\ref{eq:comp_energy})} \leq (1+4\eps\Lambda K_A)\,\left({\Hc}^n(\tilde{M})-{\Hc}^n(\tilde{D})\right)(1+O({\eps}^2)).$$
Recall that $f$ depends on $\eps$, although for notational convenience we do not explicit the dependence; we can produce $f=f^{\eps}$ as above for every $\eps<\eps_1$. The estimates obtained contain leading terms, independent of $\eps$, and errors depending on $\eps$. For a sufficiently small choice of $\eps_2\leq \eps_1$, all the errors above, for $\eps<\eps_2$, are of the type $O(\eps|\log\eps|)$. Putting together the previous estimates we conclude that, for $\eps<\eps_2$,
$$\Ece(f) \leq 2\left({\Hc}^n(M)-{\Hc}^n(B)\right)+O(\eps|\log\eps|).$$  

\medskip

\noindent \textit{Path to $-1$}. We will now exhibit a continuous (in $r$) path $\{f_r\}_{r\in[0,4\eps\Lambda]}$, with $f_r\in W^{1,2}(N)$ for all $r$ (actually, $f_r\in W^{1,\infty}(N)$), that starts at $f_0=f$ and ends at $f_{4\eps\Lambda}\equiv -1$.
Recall that $f=-1$ outside $\quo$, so we set $f_r=-1$ in $N\setminus (\quo)$ for every $r$. In order to define $f_r$ in $\quo$, we will give a definition in $\tilde{M} \times [0,\om)$, taking care that it is even on $\tilde{M}$ and therefore passes to the quotient. (Again, by abuse of notation we call $f_r$ both the function on $\quo$ and the one on the double cover.) For $(y,z)\in \tilde{M} \times [0,\om)$ we define $f_r$, for $r\in[0,4\eps\Lambda]$ by

\be
\label{eq:pathto-1}
f_r(q,z)=\Psi_{4\eps\Lambda\chi(q)+r}(z).
\ee
For $r=4\eps\Lambda$ this function becomes constantly $-1$. We can check the Lipschitz property of $f_r$ on $N$ as done for $f$ earlier, by noticing that on $B_\rho(x) \times (-\om,\om)$ (for $B_\rho(x) \subset M$) we have the coordinate expression $f_r (a,z)=\Psi_{4\eps\Lambda\chi(a)+r}(z)$ thanks to the fact that $\Psi_t$ is even. This expression shows that for every $r\in[0,4\eps\Lambda]$ the function $f_r$ is smooth on $N\setminus M$ and globally Lipschitz.

To visualize the evolution in $r$, recall from (\ref{eq:family2}) that, for every $y$, the two half profiles $\left.\Psi_{4\eps\Lambda\chi(y)}(z)\right|_{\{z>0\}}$ and $\left.\Psi_{4\eps\Lambda\chi(y)}(z)\right|_{\{z<0\}}$ in the normal direction to $T_y M$ move towards each other at unit speed (creating a non smooth point at $z=0$ on $M$, where the regularity of $f_r$ is just Lipschitz).

The same computation performed in (\ref{eq:comp_energy}), this time on $f_r$, shows that 
\be
\label{eq:energy_path_1}
\Ece(f_r)\leq (1+O(\eps|\log\eps|))2 ({\Hc}^n(M)-{\Hc}^n(B))
\ee
for every $r$, i.e.~the energy stays below $2({\Hc}^n(M)-{\Hc}^n(B))+O(\eps|\log\eps|)$ along this path. (Moreover it reaches $0$ at the end of the path.) This follows immediately upon noticing that, using the coarea formula as in (\ref{eq:comp_energy}), this time for $f_r$, the inner integrands that we find are controlled by those computed for $f$, since for every $t_1$ we have $\int_0^{\om} \Psi_{t_1+r} \leq \int_0^{\om} \Psi_{t_1}$. The errors are of the form $O(\eps|\log\eps|)$ for all $\eps<\eps_2$ for some suitably small choice of $\eps_2\leq \eps_1$. 

\begin{oss}
The choice of $\eps_2$ will be made several times in the forthcoming sections, always with the scope of making the errors controlled by $C\eps|\log\eps|$ with $C$ independent of $\eps \in (0,\eps_2)$. I should be kept in mind that the specific value $\eps_2$ might change from one instance to the next, however we make finitely many choices, therefore we implicitly assume that the correct $\eps_2$ is the smallest of all. From now on, this remark will be tacitly applied every time we say that the errors are of the form $O(\eps|\log\eps|)$ for all $\eps<\eps_2$, for some suitably small choice of $\eps_2$.
\end{oss}

\subsection{Lowering the peak}
\label{peak}

In this section we construct the portion of our path that ensures the upper bound on the maximum along the path. We keep writing $\Lambda=3|\log\eps|$. To begin with, recall the definition of signed distance (to $M_\varphi$) given in Section \ref{immersions} and note that in the case $\varphi=2\eps\Lambda$ we have that $\text{dist}(x,M_{2\eps\Lambda})$, which was defined on $\tilde{M}  \times (0,\om)$, extends by continuity to $\tilde{M}  \times \{0\}$ with value $-2\eps\Lambda$.

The definition of $f$ in Section \ref{2M-2B} can therefore equivalently be given as follows at $x=(y,z)\in \tilde{M}  \times [0,\om)$:
$$f(x) = \OHet^{\eps}_{4\eps\Lambda\chi(y)}(-\text{dist}(x,M_{2\eps\Lambda})),$$
where $\OHet^{\eps}_{s}(\cdot)=\OHet^{\eps}(\cdot -s)$ and $\chi$ 
is as in the previous section. Note that the expression is even in $y$, therefore $f$ passes to the quotient $\quo$. (The signed distance $\text{dist}(x,M_{2\eps\Lambda})$ passes to the quotient as a smooth function on $\quo$, this provides an alternative way to chech that $f$ is Lipschitz in $\quo$). Recall that $f=-1$ in $N\setminus \quo$.

\medskip

Recall the choice of $\tilde{\phi}$ in Lemma \ref{lem:unstableregion} and Remark \ref{oss:def_immersions}, with $\text{supp}\tilde{\phi} \cap \tilde{D} = \emptyset$. We will now produce a continuous path $g_t$, $t\in [0,t_0]\to g_t \in W^{1,2}(N)$, with the property that $g_0=f$ and $g_t$ is an Allen--Cahn approximation of $\iota_B + (2\eps\Lambda+t\tilde{\phi})\nu$. Under the identification of $M\times [0,\om)$ with $\quo$, this is the hypersurface
$$\text{graph}\left(\left. (2\eps\Lambda+t\tilde{\phi} )\right|_{\tilde{M}_{B}}\right) \subset \tilde{M}\times (0,\om),$$ where $t\in[0,t_0]$. Subsequently, we will produce a continuous path $g_{t_0+s}$, $s\in[0,1]$, that starts at $g_{t_0}$ and ends at a function that is an Allen--Cahn approximation of $$\text{graph}\left( (2\eps\Lambda+t_0\tilde{\phi} ) \right).$$ The latter path starts from the endpoint of the former and has the geometric effect of ``closing the hole'' at $B$.

\begin{oss}
 \label{oss:extend_unsigned_distance}
Recall from Section \ref{immersions} the notation $M_{C+t\tilde{\phi}}$
and 
the notion of signed distance to $M_{C+t\tilde{\phi}}$.
The signed distance $\text{dist}(x,M_{2\eps\Lambda+t\tilde{\phi}})$ is defined on  $\tilde{M}  \times (0,\om)$. 
We point out the following facts. Let $x\in  \tilde{M} \times \{0\}$ and $x_j\to x$, $x_j\in \tilde{M}  \times (0,\om)$ (so that the signed distance is negative on $x_j$); then $\limsup_{j\to \infty} \text{dist}(x_j,M_{2\eps\Lambda+t\tilde{\phi}}) \leq -2\eps\Lambda$. Moreover, $\text{dist}(x,M_{2\eps\Lambda+t\tilde{\phi}})$ extends continuously, with value $-2\eps\Lambda$, to $\left(\tilde{M}\setminus \text{supp}(\tilde{\phi})\right)\times \{0\}$. In particular, this continuous extension is valid on $\tilde{D}\times \{0\}$.
\end{oss}

\medskip

\noindent \textit{Definition of $g_t$}. As done earlier, we will define the functions on the double cover of $\quo$, taking care that they pass to the quotient and are there Lipschitz. Again, we do not distinguish the notation for the functions on $\quo$ and on the double cover. For $t\in [0,t_0]$, we define

$$g_t(x)=\OHet^{\eps}_{4\eps\Lambda\chi(y)}(-\text{dist}(x,M_{2\eps\Lambda+t\tilde{\phi}}))\,\,\,\text{ for } x\in \tilde{M}\times (0,\om).$$
This function is smooth on $\left(\quo\right) \setminus M$ and can be extended smoothly to $N\setminus M$ by setting it equal to $-1$ on $N\setminus \left(\quo\right)$.

We can now check that (for every $t$) $g_t$ extends continuously across $M$. On the support of $\chi$, which is contained in $\tilde{D}$, we have, thanks to Remark \ref{oss:extend_unsigned_distance}, that $-\text{dist}(x,M_{2\eps\Lambda+t\tilde{\phi}})$ is continuous and takes value $2\eps\Lambda$: therefore $g_t$ extends continuously, with value $\OHet^{\eps}_{4\eps\Lambda\chi(y)}(2\eps\Lambda)$, across $D$. On the complement of $\text{supp}\chi$, on the other hand, we have $\liminf_{x\to \tilde{M}\times \{0\}}-\text{dist}(x,M_{2\eps\Lambda+t\tilde{\phi}})\geq 2\eps\Lambda$: this implies that, away from the support of $\chi$, i.e.~where $\OHet^{\eps}_{4\eps\Lambda\chi(y)}=\OHet^{\eps}$, the function $g_t$ extends continuously to $(\tilde{M}\setminus \tilde{D})\times \{0\}$ with value $+1$. More precisely, we can check that $g_t$ is Lipschitz on $N$, and actually smooth in the complement of $\text{supp}(\nabla \chi) \times \{0\}$. The smoothness in $N\setminus (\overline{D}\setminus B)$ is immediate since $\OHet^{\eps}$, $\text{dist}$ and $\chi$ are smooth and thanks to the fact that $\OHet^{\eps}$ has all derivatives vanishing at $\pm (2\eps \Lambda)$. So we only need to check the Lipschitz property at an arbitrary point $x\in \overline{D}\setminus B$. Let $B_\rho(x)\subset M$ be a geodesic ball in $M$ centred at $x$; we work in a tubular neighbourhood $B_\rho(x)\times (-\om, \om)$ (using Fermi coordinates $(y,s)$). Recall that $\text{supp}(\phi)$ is disjoint from $\overline{D}$. Then $g_t(y,s)=\Psi_{2\eps\Lambda\chi(y)}(s)$, which shows that $g_t$ is Lipschitz in this neighbourhood.

\medskip

The path $t\in[0,t_0] \to g_t\in W^{1,2}(N)$ is easily checked to be continuous (in $t$), moreover $g_0=f$ by the expression of $f$ given in the beginning of this section. We compute $\Ece(g_t)$, similarly to (\ref{eq:comp_energy}) of Section \ref{2M-2B}; this time we use the coarea formula with the function $\Pi_{c,t}$ and the bounds on the Jacobian of $\Pi_{c,t}$, for $c=2\eps\Lambda$, see Section \ref{immersions}. We find, for every $t$, that $\Ece(g_t) \leq \text{area of }\text{graph}\left(\left. (2\eps\Lambda+t\tilde{\phi} )\right|_{\tilde{M}_{B}}\right) + O(\eps|\log\eps|)$ whenever $\eps<\eps_2$ for a suitably small choice of $\eps_2$ and independently of $t$. Therefore (for all $\eps<\eps_2$ and for all $t\in [0,t_0]$)
\be
\label{eq:energy_path_2}
\Ece(g_t) \leq 2\left({\Hc}^n(M)-\frac{3}{4}{\Hc}^n(B)\right) + O(\eps|\log\eps|)
\ee
thanks to Lemma \ref{lem:unstable_addcnst_region}.  

\medskip

\noindent \textit{Definition of $g_{t_0+s}$: ``closing the hole at $B$''}. 
We set, for $s\in [0,1]$, for $x\in \tilde{M} \times (0,\om]$,

\be
\label{eq:gh}
g_{t_0+s}(x) = \OHet^{\eps}_{4\eps\Lambda (1-s)\chi(y)}(-\text{dist}(x,M_{2\eps\Lambda+t_0\tilde{\phi}})).
\ee
The argument that follows the definition of $g_t$ above can be repeated to show that the functions $g_{t_0+s}$ (passed to the quotient) extend to smooth functions on $N\setminus M$ (with $g_{t_0+s}=-1$ in $N\setminus \left(\quo\right)$) and extend in a Lipschitz fashion across $M$. Note that as $s$ increases from $0$ to $1$, the ``weight carried by $B$'' increases from $0$ to $2$.

The path $s\in [0,1] \to g_{t_0+s}\in W^{1,2}(N)$ is continuous in $s$ by definition. Note that $g_{t_0+1}$ is an Allen--Cahn approximation of the boundaryless hypersurface given by $\text{graph}\left( (2\eps\Lambda+t_0\tilde{\phi} )\right)$.

Moreover, with computations as those in Section \ref{2M-2B}, we can compute for every $s$ the energy $\Ece(g_{t_0+s})$. For sufficiently small $\eps_2$, we obtain that, for every $s\in [0,1]$ and for every $\eps<\eps_2$, the following holds: $\Ece(g_{t_0+s}) \leq \text{ area of graph of }\left( (2\eps\Lambda+t_0\tilde{\phi} )\right)+O(\eps|\log\eps|)$. Therefore

\be
\label{eq:energy_path_3}\Ece(g_{t_0+s}) \leq 2{\Hc}^n(M) -\tau +O(\eps|\log\eps|)
\ee
thanks to Lemma \ref{lem:unstable_addcnst_region}  (and by the initial choice $\eps_1<\frac{c_0}{20}$), uniformly in $s\in [0,1]$ and for every $\eps<\eps_2$.

\subsection{Connect to a stable critical point of $\Ece$}
\label{reach_1}

To conclude, we will produce a path (continuous in $W^{1,2}(N)$) that connects $g_{t_0+1}$ to a stable critical point of $\Ece$ (and then check that for sufficiently small $\eps$, the bumpy metric assumption implies that these stable critical points are actually strictly stable). This will be achieved by a negative gradient flow, employing a barrier $m$. To ensure the barrier condition we need to push $g_{t_0+1}$ a bit more ``away from $M$'': we define $g_{t_0+1+r}$ for $r\in [0,c_0-2\eps\Lambda]$ so that (geometrically speaking) we reach an Allen--Cahn approximation of the immersion $\iota+(c_0 + t_0\tilde{\phi})\nu$ when $r=c_0-2\eps\Lambda$. 

For $r\in [0,c_0-2\eps\Lambda]$ we define $g_{t_0+1+r}=-1$ on $N\setminus \left(\quo\right)$ and for $x\in M\times [0,\om)$

\be
\label{eq:push_more}
g_{t_0+1+r}(x) = \OHet^{\eps}(-\text{dist}(x,M_{2\eps\Lambda + r +t_0\tilde{\phi}})).
\ee
Note that this is well-defined and equal to $+1$ on $M$ thanks to Remark \ref{oss:extend_unsigned_distance} and the function is smooth on $N$ for every $r$. Moreover, the path is continuous in $r$ (in $W^{1,2}(N)$) and computing $\Ece$ again, we obtain (as done for $g_{t_0+s}$) the bound

\be
\label{eq:energy_path_push_more}
\Ece(g_{t_0+1+r}) \leq 2{\Hc}^n(M) -\tau +O(\eps|\log\eps|)
\ee
thanks to Lemma \ref{lem:unstable_addcnst_region}  (and by the initial choice $\eps_1<\frac{c_0}{20}$), uniformly in $r\in [0,c_0-2\eps\Lambda]$ and for every $\eps<\eps_2$ (for a suitable choice of $\eps_2\leq \eps_1$).

\medskip

We set $h=g_{t_0+1+c_0-2\eps\Lambda}$: by definition, $h(x)=\OHet^{\eps}(-\text{dist}(x,M_{c_0 +t_0\tilde{\phi}}))$ for $x\in {\quo}$ and $h=-1$ on $N\setminus \left(\quo\right)$. Also note that $h=+1$ on a tubular neighbourhood of $M$ of semi-width $\frac{19}{20}c_0$ , by (i) of Section \ref{parameters}.

\medskip

\textit{Barrier construction}. With this preparation, we are ready to construct the barrier $m$. This will be an Allen--Cahn approximation of $\iota+z_0\eta\nu$ (see end of Section \ref{immersions}). Recall that this is an embedded hypersurface that we also denote by $M_{z_0\eta}$ and we defined a signed distance to $M_{z_0\eta}$ (for points in $\quo\setminus M$) in Section \ref{parameters}. 
For $x$ in $\tilde{M}\times (0,\om)\equiv \left( \quo\right)  \setminus M$, we set
 
$$m(x) = \OHet^{\eps}(-\text{dist}(\cdot, \text{graph}(z_0\eta))).$$
This function extends smoothly across $M$ with value $+1$: indeed, in a tubular neighbourhood of $M$, the argument of $\Psi$ is larger than $2\eps\Lambda$ by (ii) of Section \ref{parameters} and this means that $m=+1$ in a tubular neighbourhood of $M$ (recall that $\OHet^{\eps}$ is smooth with all derivatives vanishing at $\pm 2\eps\Lambda$). Similarly we set $m=-1$ on $N\setminus \left(\quo\right)$, which is also a smooth extension since $m=-1$ in $\tilde{M}\times (c_0,\om)$ by (iii) of Section \ref{parameters}. The function $m$ is thus smooth on $N$.

\medskip

In order to use it as a barrier for the flow starting at $h$, we check that $m\leq h$. Recall that $-1\leq m,h \leq +1$ by construction.  
By (iii) of Section \ref{parameters}, on the set where $\text{dist}(\cdot, \text{graph}(z_0\eta))\leq 2\eps\Lambda$ we have $\text{dist}(\cdot,M_{c_0 +t_0\tilde{\phi}})\leq -2\eps\Lambda$. This implies that whenever $m>-1$ we have $h=+1$ and so $m\leq h$ on $N$.

\medskip

\textit{Flow starting at $m$}. We consider the negative $\Ece$-gradient flow with initial condition $m$. Since $m=-1$ on $\{|\text{dist}(\cdot, \text{graph}(z_0\eta))|\geq 2\eps\Lambda\}$ we get 
$$-\ca{E'}{\eps}(m)=0 \text{ on } \{|\text{dist}(\cdot, \text{graph}(z_0\eta))|\geq 2\eps\Lambda\}.$$
To compute the first variation of $m$ with respect to $\Ece$ on $\{-2\eps\Lambda<\text{dist}(\cdot, \text{graph}(z_0\eta))<2\eps\Lambda\}$, we write the Allen--Cahn PDE for $m$. We use the chain rule to express the Laplacian in Fermi coordinates $(a,d)$ centred on $M_{z_0\eta}$ (so $m=\OHet^{\eps}(-d)$ in these coordinates). Since $|\nabla d|=1$, using the temporary notation $\eta(x)=\OHet^{\eps}(-x)$, we get $\Delta \left(\eta(d)\right) =  (\eta)''(d) + \eta'(d) \Delta d$. Moreover, $\Delta d = -H_{d,z_0\eta}$ with our convention for the choice of normal. Therefore ($\OHet^{\eps}$ and its derivatives are evaluated at $-d$)

$$-\ca{E'}{\eps}(m) = \eps \Delta m - \frac{W'(m)}{\eps}  = \eps({\OHet^{\eps}})'' - \frac{W'({\OHet^{\eps}})}{\eps} + \eps H_{d,z_0\eta} ({\OHet^{\eps}})^\prime=$$
\be
\label{eq:almost_mean_convex}
= O({\eps}^2)+\eps H_{d,z_0\eta} ({\OHet^{\eps}})^\prime.
\ee
By (iv) of Section \ref{parameters} the hypersurfaces $\{\text{dist}(\cdot, \text{graph}(z_0\eta))=d\}$ have scalar mean curvature $H_{d,z_0\eta} \geq \frac{z_0}{4}\, \lambda \, \min_{\tilde{M}}\eta$ for $d\in[-2\eps\Lambda, 2\eps\Lambda]$. Moreover $0\leq ({\OHet^{\eps}})^\prime \leq \frac{3}{\eps}$.
As a consequence, we obtain $-\ca{E'}{\eps}(m)\geq O({\eps}^2)$. Denote by $\mu=\mu_{\eps}>0$ a constant such that $|O({\eps}^2)|<\mu$, where $O(\eps^2)$ is the function appearing in (\ref{eq:almost_mean_convex}) (we will finalize the choice of $\mu$ later). Then we consider the functional $\ca{F}{\eps,\mu}(u)=\Ece(u)-\mu \int_N u$. For the first variation of $m$ with respect to $\ca{F}{\eps,\mu}$ we have 

$$-\ca{F'}{\eps,\mu}(m)> 0.$$

This allows to prove that the negative $\ca{F}{\eps,\mu}$-gradient flow $\{m_t\}$ ($t\in[0,\infty)$) with initial condition $m_0=m$ is ``mean convex'', i.e.~the condition $-\ca{F'}{\eps,\mu}(m_t)> 0$ holds for all $t\geq 0$. (By standard semi-linear parabolic theory, the flow exists smoothly for all times.) 
To see that mean convexity is preserved, one argues as follows. For notational convenience, we write for this paragraph $F_t=\eps \Delta m_t - \frac{W'(m_t)}{\eps} + \mu$ (the negative gradient). Differentiating the PDE we get that the evolution of $F_t$ is given by $\p_t F_t =   \Delta F_t - \frac{W''(m_t)}{\eps^2} F_t$. So $F_t$ is a smooth solution of $ \p_t \gamma =   \Delta \gamma - \frac{W''(m_t)}{\eps^2} \gamma$, and the constant $\gamma=0$ is also a solution to the same PDE. The condition $F>0$ is therefore preserved by the maximum principle.

The mean convexity immediately gives that $m_t:N\to \R$ increases in $t$ (since $\p_t m_t = -\frac{1}{\eps}\ca{F'}{\eps,\mu}(m_t)>0$). Moreover the limit $m_\infty$ of this flow (as $t\to \infty$) must be a stable solution of $\ca{F'}{\eps,\mu}=0$. This follows by combining the mean convexity property with the maximum principle, see for example \cite[Lemma 7.3]{B1}.

\medskip

\textit{Flow starting at $h$}. By the maximum principle, since $h\geq m$, the negative $\ca{F}{\eps,\mu}$-gradient flow $\{h_t\}$ starting at $h_0=h$ stays above $\{m_t\}$ at all times. 
Recall that there exist two constant solutions of $\ca{F'}{\eps,\mu}=0$: the constant $k$ must satisfy $W'(k) = \eps \mu k$ (and therefore $W(k)\approx c_w^2 \eps^2 \mu^2$), so one constant is slighly larger than $-1$ and the other is slighly larger than $+1$ (both are trivially stable).
Since $h <k_{\eps,\mu}$ (because $h\leq 1$) and $k_{\eps,\mu}$ is stationary, we get that $h_t\leq k_{\eps,\mu}$ for all $t$. Moreover, since $m_t\geq m$ (and $m=1$ on $M$), the limit of $h_t$ cannot be the constant close to $-1$ and we thus have two options, as $t\to \infty$:

\noindent (a) $h_t$ converges to the constant $k_{\eps,\mu}$ (which solves $\ca{F'}{\eps,\mu}=0$);

\noindent (b) $h_t$ converges to a (stable) non-constant solution $h_\infty$ of $\ca{F'}{\eps,\mu}=0$.

\medskip

In either case (a) or (b), using $h_\infty$ as initial condition, we can run the negative $\Ece$-gradient flow $\{h_{\infty,\beta}\}$ for $\beta\geq 0$; the initial condition $h_{\infty,0}=h_{\infty}$ satisfies $\ca{E'}{\eps}(h_\infty)=-\mu<0$, therefore the same arguments used above proves that the sign of the first variation is preserved, i.e.~$\ca{E'}{\eps}(h_{\infty,\beta})<0$ for all $\beta\geq 0$ and the limit as $\beta\to \infty$ is a stable solution $h_{\infty,\infty}$ to $\ca{E'}{\eps}=0$. 

In case (a), the limit $h_{\infty,\infty}$ must be the constant $+1$, since $h_{\infty,0}>1$ and the only stable solutions to $\ca{E'}{\eps}=0$ are $\pm 1$. (In fact, in case (a), all times slices are constant and they converge to the constant $+1$.) By composing the two paths produced, first from $h$ to $k_{\eps,\mu_0}$ and then from $k_{\eps,\mu_0}$ to $+1$, we obtain a continuous path from $h$ to $+1$.

Let us analyse case (b). Again, we have that the flow is decreasing (by the mean convexity condition $-\ca{E'}{\eps}<0$).
By comparing with the negative $\Ece$-gradient flow starting at $m$, we will obtain that $\{h_{\infty,\infty}>0\}$ is also non-empty. To see this, recall that the mean curvature flow with initial condition $M_{z_0\eta}$ is well-defined, smooth and mean convex (we ensured the mean convexity by pushing by the first eigenfunction). Then \cite[6.5]{Ilm} (which requires a comparison argument and the construction of a suitable barrier) guarantees that, possibly after a time $\beta_0 \eps^2 |\log\eps|$ for some $\beta_0$ independent of $\eps$ (as long as $\eps$ is sufficiently small, only depending on $m$) the gradient flow starting at $m$ will stay above $1/2$ in a region of the form $\{d<-M_0\eps|\log\eps|\}$ for some $M_0>0$ independent of $\eps$, in particular it will stay $>1/2$ on a fixed tubular neighbourhood of the hypersurface $M$. Since we had ensured $h_\infty>m$, also $h_{\infty,\infty}$ must be $>1/2$ on a fixed tubular neighbourhood of $M$ (independently of $\eps$), ensuring that $\{h_{\infty,\infty}>0\}$ contains this fixed neighbourhood. 
In conclusion, in case (b) we have that $h_{\infty,\infty}$ is a stable Allen--Cahn solution with the property that there exists a non-empty open set, independent of $\eps$, contained
in $\{h_{\infty,\infty}>1/2\}$.

The function $v_{\eps}$ in Proposition \ref{Prop:main} is the constant $+1$ if case (a) arises and is $h_{\infty,\infty}$ if case (b) arises. (We showed that $h_{\infty,\infty}$ cannot be the constant $-1$.)

\medskip

\textit{Evaluation of $\Ece$ on the path}.
Let us estimate the value of $\Ece$ along this path. For this, note that $\ca{F}{\eps,\mu}$ is decreasing along the flow $\{h_t\}$, therefore $\Ece(h_t)\leq \Ece(h) + 2\mu {\Hc}^{n+1}(N)$ for all $t$: this implies that $\Ece$ is bounded above indepedently of $\eps$. More precisely, choosing $\mu=\mu_{\eps}$ to be $2\|O(\eps^2)\|_{\infty}$, where $O(\eps^2)$ is the function appearing in (\ref{eq:almost_mean_convex}), and recalling that $\Ece(h)\leq 2{\Hc}^n(M)-\tau + O(\eps|\log\eps|)$, we can absorbe $2\mu {\Hc}^{n+1}(N)$ in the $O(\eps|\log\eps|)$ for $\eps$ sufficiently small. In other words we obtain the upper bound 
\be
\label{eq:energy_path_4}
\Ece(h_t)\leq 2{\Hc}^n(M)-\tau+O(\eps|\log\eps|)
\ee
for all $t$ and for $\eps<\eps_2$. For the second part of the path ($h_{\infty,\beta}$) the energy $\Ece$ is decreasing, so the same upper bound holds.
(In case (a), one can easily check that $\Ece$ decreases from $c_w^2 \eps \mu^2 {\Hc}^{n+1}(N)$ to $0$ on the second part of the path).

\subsection{Conclusive arguments}
\label{final_argument}

In Sections \ref{2M-2B}, \ref{peak} and (in the beginning of) \ref{reach_1} we exhibited (given $M$, which also fixed $B$ and $\tau$) continuous paths in $W^{1,2}(N)$ that join $-1$ to $f$, $f$ to $g$, $g$ to $h$. Then in Section \ref{reach_1} we produced by gadient flow a path (still continuous in $W^{1,2}(N)$) that joins $h$ to a stable solution $h_{\infty,\infty}$ of $\ca{E'}{\eps}=0$ that is not the constant $-1$. We obtained the energy bounds respectively (\ref{eq:energy_path_1}), (\ref{eq:energy_path_2}), (\ref{eq:energy_path_3}), (\ref{eq:energy_path_push_more}), (\ref{eq:energy_path_4}). These are valid uniformly on the paths for all $\eps<\eps_2$.

For all sufficiently small $\eps$, composing these partial paths we obtain a continuous path in $W^{1,2}(N)$ that starts at the constant $-1$ and ends at $h_{\infty,\infty}$ and such that
$$\Ece \text{ along this path is} \leq 2{\Hc}^n(M)-\min\left\{\tau, \frac{3{\Hc}^n(B)}{4}\right\} + O(\eps|\log\eps|).$$

Choosing $\eps_3$ sufficiently small to ensure $O(\eps|\log\eps|)\leq \min\left\{\frac{\tau}{2}, \frac{\mathcal{H}^n(B)}{6}\right\}$ the above bound gives, for all $\eps<\eps_3$, that the maximum of $\Ece$ on the path is at most $2{\Hc}^n(M)-\min\left\{\frac{\tau}{2}, \frac{{\Hc}^n(B)}{2}\right\}$. This proves Proposition \ref{Prop:main}, setting $v_{\eps}=h_{\infty,\infty}$.

\begin{oss}[proof of Corollary \ref{cor:two_sided_bumpy}]
Multiplicity-$1$ convergence of critical points $u^{\eps}$ of $\Ece$ to a minimal hypersurface $M$ implies that the functions $u^{\eps}$ converge in $BV(N)$ to a function $u_\infty:N \to \{\pm1\}$, with the property that $M=\p \{u_\infty=+1\}$. In particular, the inner normal to $\{u_\infty=+1\}$ provides a global normal to $M$, so $M$ is two-sided.
\end{oss}
 
\begin{oss}
The multiplicity-$1$ convergence guarantees that the Morse index of $M$ is at most $1$ and that only way for it to vanish is that $M$ has non trivial nullity. However, with a bumpy metric, the nullity has to be trivial, hence the Morse index of $M$ is $1$. (The lower semi-continuity of the Morse index also follows from \cite{Gas} or \cite{Hie}.)
\end{oss}

\begin{oss}
 \label{oss:optimalpath}
We sketch how to prove the statement in Remark \ref{oss:index_u_eps}. Since $V^{u_{\eps}}$ converge (upon extraction of a sequence) to $|M|$ (with multiplicity $1$), and $M$ is strictly stable (and two-sided) by the bumpy metric assumption, then the nullity of $u_{\eps}$ has to be $0$ for all sufficiently small $\eps$ (the nullity is upper-semi-continuous as $\eps\to 0$). Then $u_{\eps}$ has Morse index $1$ and we can perturb it by its first eigenfunction (in both directions) and then use a negative gradient flow (for $\Ece$) to produce a continuous path that joins the constant $-1$ to a stable solution, which once again has to be strictly stable for sufficiently small $\eps$. 
\end{oss}

\appendix
\section{Passing stability to the limit for the double cover}
\label{double_cover}
The proof of Lemma \ref{lem:stable_implies_strictly_2} will be a consequence of the following observation.

\begin{lem}
\label{lem:stable_implies_strictly}
Let $N$ be a Riemannian manifold of dimension $n+1$, and let $v_{{\eps}_i}:N\to \R$ be stable solutions to $\ca{E'}{{\eps}_i}=0$, with $V^{v_{{\eps}_i}}\to V$ ($V^{v_{{\eps}_i}}$ are the varifolds associated to $v_{{\eps}_i}$) and $V=\sum \theta_\alpha |M_\alpha|$, where $\theta_\alpha \in \N$ and $M_\alpha$ is a smoothly embedded minimal hypersurface with $\text{dim}_{\mathcal{H}}(\overline{M}\setminus M)\leq n-7$. Then the oriented double cover of $M_\alpha$ is stable for each $\alpha$.
\end{lem}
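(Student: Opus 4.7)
The plan is to pass the Allen--Cahn stability condition $\mathcal{E}''_{\eps_i}(v_{\eps_i})(\phi,\phi)\geq 0$ to a lower bound on the Jacobi quadratic form of each $\tilde{M}_\alpha$. Since the $M_\alpha$'s are disjoint embedded hypersurfaces, I localise one $\alpha$ at a time: outside a thin tubular neighbourhood of $M_\alpha$, $|v_{\eps_i}|\to 1$ uniformly by the varifold convergence $V^{v_{\eps_i}}\to V$, and contributions from other components may be cut off. The target is to construct, for each $\varphi\in C^\infty_c(\tilde{M}_\alpha\setminus \Sing V)$, a sequence $\phi_i\in W^{1,2}(N)$ such that
\begin{equation*}
\lim_{i\to\infty}\mathcal{E}''_{\eps_i}(v_{\eps_i})(\phi_i,\phi_i) \;=\; C_0 \int_{\tilde{M}_\alpha}\Bigl( |\nabla \varphi|^2 - (|A|^2 + \Rc{N}(\nu,\nu))\varphi^2\Bigr)\,d\mathcal{H}^n
\end{equation*}
for some positive constant $C_0$; nonnegativity of the left-hand side then transfers to stability of $\tilde{M}_\alpha$.

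The construction of $\phi_i$ uses the double-cover tubular parametrisation $\tilde{M}_\alpha\times[0,\om)\to\quo$ of Section \ref{unstable_region}. I set $\phi_i(p,s) = \varphi(p)\,\zeta_{\eps_i}(s)$ where $\zeta_{\eps_i}$ is a properly normalised normal profile, the natural choice being a smooth cutoff in $s$ times the rescaled heteroclinic derivative $\Het'(s/\eps_i)$, arranged to be even across $\{s=0\}$ so as to descend to the quotient $\quo$, and extended by zero outside the tubular neighbourhood. Plugging $\phi_i$ into $\mathcal{E}''_{\eps_i}(v_{\eps_i})$ and applying the coarea formula in Fermi coordinates, the normal integrals concentrate on $M_\alpha$ and produce (using the asymptotic description of $v_{\eps_i}$ from \cite{HT} as a $\theta_\alpha$-fold superposition of rescaled heteroclinics) a positive multiplicative factor proportional to $\theta_\alpha$, while the tangential part yields the $|\nabla\varphi|^2$ term. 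The curvature correction $-(|A|^2+\Rc{N}(\nu,\nu))\varphi^2$ arises upon expanding the ambient Laplacian in Fermi coordinates via the Riccati equation for $|A|^2$ of the parallel hypersurfaces and the second variation of arc length along the normal geodesics for the Ricci contribution. This is the standard Tonegawa-style stability computation.

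The principal obstacle is the one-sided case: if $M_\alpha$ is not two-sided in $N$, a function on $\tilde{M}_\alpha$ does not descend to $M_\alpha$, so nearest-point projection onto $M_\alpha$ cannot be used to pull back $\varphi$ to a globally defined function on $N$. The $\tilde{M}_\alpha\times[0,\om)\to\quo$ framework of Section \ref{unstable_region} is tailor-made to resolve this, but one must verify carefully that $\phi_i$ descends single-valuedly (ensured by evenness of $\zeta_{\eps_i}$ at $s=0$) and lies in $W^{1,2}(N)$. A secondary technical point is the handling of $\Sing V$, treated by a standard capacity cutoff using $\dim_{\mathcal{H}}(\Sing V)\leq n-7$, followed by a density argument to extend admissibility to all $\varphi\in W^{1,2}(\tilde{M}_\alpha)$. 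A third issue is the higher-multiplicity case $\theta_\alpha\geq 2$, where $v_{\eps_i}$ exhibits multiple transverse transitions separated by distances $\gg \eps_i$; the profile $\zeta_{\eps_i}$ should then be replaced by a sum of profiles centred at each transition, whose contributions are asymptotically orthogonal in the second variation, producing the factor proportional to $\theta_\alpha$ in the limit, after which the overall positivity passes to the Jacobi form as claimed.
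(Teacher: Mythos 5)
Your construction proves stability of $\tilde M_\alpha$ only for \emph{even} test functions $\varphi$ (those descending to functions on $M_\alpha$), and the passage to general $\varphi$ --- which is the entire content of the lemma when $M_\alpha$ is one-sided --- does not go through as described. The problem is the odd part of $\varphi$. Write $p_1,p_2$ for the two lifts of a point of $M_\alpha$, so that $\nu(p_2)=-\nu(p_1)$ and, in Fermi coordinates $(a,t)$ near $M_\alpha$, the point $(p_1,s)$ corresponds to $t=s$ and $(p_2,s)$ to $t=-s$. If $\varphi$ is odd, the function $\varphi(p)\zeta_{\eps_i}(s)$ descends in these coordinates to $\varphi(p_1(a))\,\mathrm{sgn}(t)\,\zeta_{\eps_i}(|t|)$: evenness of $\zeta_{\eps_i}$ does \emph{not} make this single-valued on $N$; one needs $\zeta_{\eps_i}(0)=0$, and with $\zeta_{\eps_i}=\Het'(\cdot/\eps_i)$ (for which $\Het'(0)\neq 0$) the descended function jumps across $M_\alpha$ and is not in $W^{1,2}(N)$. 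If instead you force $\zeta_{\eps_i}(0)=0$, the effective normal profile $\mathrm{sgn}(t)\,\zeta_{\eps_i}(|t|)$ is odd in $t$, hence $L^2$-orthogonal to the ground state $\Het'(t/\eps_i)$ of the one-dimensional linearised operator $-\eps\,\partial_t^2+W''(\Het(t/\eps))/\eps$; the normal contribution to $\mathcal{E}_{\eps_i}''(v_{\eps_i})(\phi_i,\phi_i)$ then fails to cancel at leading order (it is bounded below by the spectral gap, of order $1/\eps_i$ relative to the geometric terms), so the normalised limit is not the Jacobi form of $\varphi$ and yields no information. In short: outer variations, i.e.\ scalar test functions on $N$, can only detect deformations of $M_\alpha$ with globally defined normal speed, which are exactly the even functions on $\tilde M_\alpha$.

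The paper's proof confronts precisely this point by splitting $\phi=\phi_e+\phi_o$ into even and odd parts. The even part is handled by the standard Tonegawa computation, essentially as you propose (with the multiplicity $\theta_\alpha$ entering as a constant weight that simply cancels, rather than through separated transition layers --- for stable solutions in general dimension the sheets need not separate cleanly, so your ``asymptotically orthogonal profiles'' step for $\theta_\alpha\geq 2$ is also not justified). The odd part is reinterpreted geometrically: $\phi_o\nu$ is a well-defined $C^2$ ambient vector field near $M_\alpha$ (odd times odd is even), and the non-negativity of the second variation of area along the corresponding \emph{inner} (domain) variation is deduced from the stability of $v_{\eps_i}$ via Gaspar's theorem \cite{Gas} relating the second inner variation of $\Ece$ to $\ca{E''}{\eps}$ in the limit --- a genuinely different analytic input that your proposal is missing. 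Finally, the cross terms between $\phi_e$ and $\phi_o$ in the second variation of area vanish by parity, so the two non-negative contributions add to give stability of $\tilde M_\alpha$ for arbitrary $\phi$.
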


Note that the regularity of $\spt{V}$ is not an assumption: it follows from the fact that any limit will be a stationary integral varifold by \cite{HT} satisfiying a stability condition and therefore its support must be smooth away from a codimension-$7$ set by \cite{TonWic}, \cite{Wic}.

\begin{proof}[proof of Lemma \ref{lem:stable_implies_strictly}]
Denoting by $V^{i}=V^{v_{{\eps}_i}}$ the associated varifolds, we have $V^i\to V$. Let $R$ be the smoothly embedded part of a connected component of $\spt{V}$. Then $R$ is a stable minimal hypersurface and more precisely, if $R$ carries multiplicity $\theta \in \N$ we have that for every $f\in C^2(R)$ the following inequality holds: $\int f^2 (|A|^2+\text{Ric}_N(\nu,\nu)) \theta d\mathcal{H}^n\res R \leq \int |\nabla f|^2   \theta d\mathcal{H}^n\res R$. This follows from the stability inequality $\ca{E''}{\eps}\geq 0$ as in \cite{Ton} (by first extending $f$ to a $C^2$ function on $N$, employing a tubular neighbhourhood of $R$). Since $\theta$ is constant on $R$ we can write the same inequality for $\theta=2$, which amounts to the fact that the oriented double cover $\tilde{R}$ of $R$ is stable for all deformations with initial speed given by a $C^2$ even function on $\tilde{R}$. Let now $\phi$ be a $C^2$ odd function on $\tilde{R}$. Then a deformation of $\tilde{R}$ as an immersion with initial speed given by $\phi$ amounts to an ambient deformation of $R$, with initial speed given by $\phi \nu$: since both $\nu$ and $f$ are odd on $\tilde{R}$, this is a well-defined $C^2$ vector field on $R$ (in particular, if $R$ is one-sided then this vector field must vanish). Then the second variation of area is non-negative along this deformation by using \cite{Gas} to pass to the limit the stability condition for $v_{{\eps}_i}$. 

Next, we consider an arbitrary $\phi \in C^2(\tilde{R})$. We consider the deformation of the immersion $\iota:\tilde{R}\to N$ (induced by the standard $2-1$ projection) given by $\text{exp}_{\iota(p)}(\iota(p)+t\phi(p)\nu(p))$, for $t\in (-\delta, \delta)$ and $\nu$ a choice of unit normal to the immersion. The second variation of area computed at $t=0$ along this deformation is given by 
\be
\label{eq:split_odd_even}
\int |\nabla \phi|^2 - \phi^2 (|A|^2+\text{Ric}_N(\nu,\nu))\,\, d\mathcal{H}^n\res \tilde{R}. 
\ee
Consider the involution $i:\tilde{R} \to \tilde{R}$ that sends each point to the only other point with the same image via $\iota$. Then by writing $\phi_e=\frac{\phi+\phi(i)}{2}$ and $\phi_o=\frac{\phi-\phi(i)}{2}$ we obtain $\phi=\phi_e+\phi_o$ with $\phi_e$ even on $\tilde{R}$ and $\phi_o$ odd on $\tilde{R}$. We expand and rewrite (\ref{eq:split_odd_even}) as follows:

\be
\label{eq:split_odd_even_2}
\int |\nabla \phi_e|^2 +|\nabla \phi_e|^2- \phi_e^2 (|A|^2+\text{Ric}_N(\nu,\nu))-\phi_o^2 (|A|^2+\text{Ric}_N(\nu,\nu))\,\, d\mathcal{H}^n\res \tilde{R}.
\ee
Here we used the fact that $\int \nabla \phi_e \nabla \phi_o \,\, d\mathcal{H}^n\res \tilde{R} =0$ because $\nabla \phi_o\nabla \phi_e$ is an odd function, and the fact that $\phi_e \phi_o$ is odd and $|A|^2+\text{Ric}_N(\nu,\nu)$ is even, therefore the mixed product $\phi_e \phi_o (|A|^2+\text{Ric}_N(\nu,\nu))$ integrates to $0$.

In conclusion, (\ref{eq:split_odd_even_2}) shows that the second variation of area for $\iota$ along the deformation induced by $\phi \nu$ is the sum of the second variation induced by $\phi_e \nu$ and the second variation of $2|R|$ induced by the ambient vector field (in a tubular neighbhourhood of $R$) given by $\phi_o \nu$. As we saw above that both of these are non-negative, we conclude that the double cover $\tilde{R}$ of $R$ is stable.
\end{proof}

\begin{proof}[proof of Lemma \ref{lem:stable_implies_strictly_2}]
Arguing by contradiction, let $v_{{\eps}_i}$ for ${\eps}_i\to 0$ be stable critical points of $\ca{E}{{\eps}_i}$ and assume that for every $i$ strict stability fails. 

Let $V^{i}=V^{v_{{\eps}_i}}$ denote the associated varifolds, we have $V^i\to V$ (subsequentially) and $\spt{V}$ is everywhere smoothly embedded. By Lemma \ref{lem:stable_implies_strictly} each connected component $R$ of $\spt{V}$ is a smoothly embedded closed minimal hypersurface with stable double cover. By the bumpy metric assumption, the double cover is strictly stable. Then \cite{GMN} applies to give that $V^{v_{{\eps}_i}}$ converge with multiplicity $1$ to their varifold limit $V$. The multiplicity-$1$ convergence implies that the nullity is upper-semi continuous in the $\eps\to 0$ limit, therefore $v_{{\eps}_i}$ must be strictly stable for sufficiently large $i$, contradiction.
\end{proof}


\begin{thebibliography}{99}
\bibitem{Alm} F. J. Almgren Jr. {\it Theory of varifolds} Mimeographed notes, Princeton University (1965).
\bibitem{B1} C. Bellettini {\it Multiplicity-$1$ minmax minimal hypersurfaces in manifolds with positive Ricci curvature}, arXiv:2004.10112 (2020).
\bibitem{BW3} C. Bellettini, N. Wickramasekera {\it The inhomogeneous Allen--Cahn equation and the existence of prescribed-mean-curvature hypersurfaces}, arXiv:2010.05847 (2020).
\bibitem{ChoMan} O. Chodosh, C. Mantoulidis {\it Minimal surfaces and the Allen--Cahn equation on 3-manifolds: index, multiplicity, and curvature estimates} Ann. of Math.  vol. 191, no. 1, pp. 213-328 (2020).
\bibitem{Gas} P. Gaspar {\it The Second Inner Variation of Energy and the Morse Index of Limit Interfaces} J. Geom. Anal. 30, 69-85 (2020).
\bibitem{Gua} M. Guaraco  {\it Min-max for phase transitions and the existence of embedded minimal hypersurfaces} J. Diff. Geom. 108, 1 (2018), 91-133. 
\bibitem{GuaGas} P. Gaspar , M. Guaraco {\it The Weyl Law for the phase transition spectrum and the density of minimal hypersurfaces} Geometric and Functional Analysis 29, 2, 2019, 382-410.
\bibitem{GMN} M. Guaraco, F. Cod\'{a} Marques, A. Neves {\it Multiplicity one and strictly stable Allen-Cahn minimal hypersurfaces} preprint arXiv:1912.08997 2019.
\bibitem{Hie} F. Hiesmayr {\it Spectrum and index of two-sided Allen--Cahn minimal hypersurfaces} Comm. Part. Diff. Eq. 43, No. 11, 1541-1565 (2018).
\bibitem{HT} J. E. Hutchinson, Y. Tonegawa {\it Convergence of phase interfaces in the van der Waals--Cahn--Hilliard theory} Calc. Var. 10 (2000), no. 1, 49-84.
\bibitem{Ilm} T. Ilmanen {\it Convergence of the Allen-Cahn equation to Brakke's motion by mean curvature} J. Differential Geom. Volume 38, Number 2 (1993), 417-461.
\bibitem{Jost} J. Jost {\it Riemannian Geometry and Geometric Analysis}, sixth edition, Universitext, Springer (2017).
\bibitem{YLi} Y. Li {\it Existence of Infinitely Many Minimal Hypersurfaces in Higher-dimensional Closed Manifolds with Generic Metrics}, arXiv 2019.
\bibitem{LMN} Y. Liokumovich, F. Cod\'{a} Marques, A. Neves {\it Weyl law for the volume spectrum} Ann. of
Math. (2), 187 (2018), 933-961.
\bibitem{MN} F. Cod\'{a} Marques, A. Neves {\it Min-max theory and the Willmore conjecture} Ann. of Math. 179,  683-782.
\bibitem{MarNev} F. Coda Marques, A. Neves {\it Morse index and multiplicity of min-max minimal hypersurfaces} Camb. J. Math., 4(4):463-511, 2016.
\bibitem{MarNev2} F. Cod\'{a} Marques, A. Neves {\it Morse index of multiplicity one min-max minimal hypersurfaces} arXiv preprint arXiv:1803.04273v1, 2018
\bibitem{MarNev3}  F. Cod\'{a} Marques, A. Neves {\it Existence of infinitely many minimal hypersurfaces in positive Ricci curvature} Invent. Math., 209(2):577-616, 2017.
\bibitem{PigRiv} A. Pigati, T. Rivi\`{e}re {\it A Proof of the Multiplicity One Conjecture for Min-Max Minimal Surfaces in Arbitrary Codimension} Duke Math. J. 169 (2020), no. 11, 2005-2044.
\bibitem{Pit} J. Pitts {\it Existence and regularity of minimal surfaces on Riemannian manifolds} Princeton University press (1977).
\bibitem{SSY} R. Schoen, L. Simon, S. T. Yau {\it Curvature estimates for minimal hypersurfaces} Acta Math. 134 (1975), 276-288.
\bibitem{SS} R. Schoen, L. Simon {\it Regularity of stable minimal hypersurfaces} Comm. Pure Appl. Math., 34(6):741-797, 1981. 
\bibitem{Song} A. Song {\it Existence of infinitely many minimal hypersurfaces in closed manifolds} arXiv:1806.08816v1, 2018.
\bibitem{Struwe} M. Struwe {\it Variational methods}, 4th edition, Springer.
\bibitem{Riv} T. Rivi\`{e}re {\it A viscosity method in the min-max theory of minimal surfaces.} Publications math\'{e}matiques de l'IH\'{E}S, 126 (2017), no. 1, 177-246.

\bibitem{Ton} Y.Tonegawa, {\it On stable critical points for a singular perturbation problem}, Comm. An. Geom. (2005), 13, 2, 439-459.
\bibitem{TonWic} Y. Tonegawa, N. Wickramasekera {\it Stable phase interfaces in the van der Waals--Cahn--Hilliard theory} J. Reine Angew. Math., 668:191-210, 2012.
\bibitem{WaWe} K. Wang,  J. Wei  {\it Finite Morse index implies finite ends} Comm. Pure Appl. Math. 72 (2019), 5, 1044-1119.
\bibitem{White1} B. White {\it The space of minimal submanifolds for varying Riemannian metrics} Indiana Univ. Math. J. 40(1):161-200, 1991. 
\bibitem{Wic} N. Wickramasekera {\it A general regularity theory for stable codimension 1 integral varifolds} Ann. of Math. (2), 179(3):843-1007, 2014. 
\bibitem{Zhou3} X. Zhou  {\it On the multiplicity one conjecture in min-max theory}, to appear in Ann. of Math.

\end{thebibliography}
\end{document}